\begin{document}

\title{On Roth's theorem on progressions}

\author{Tom Sanders}
\address{Department of Pure Mathematics and Mathematical Statistics\\
University of Cambridge\\
Wilberforce Road\\
Cambridge CB3 0WB\\
England } \email{t.sanders@dpmms.cam.ac.uk}

\begin{abstract}
We show that if $A \subset \{1,\dots,N\}$ contains no non-trivial three-term arithmetic progressions then $|A|=O(N/\log^{1-o(1)}N)$. 
\end{abstract}

\maketitle

\section{Introduction}

In this paper we prove the following version of Roth's theorem on arithmetic progressions.
\begin{theorem}\label{thm.roth}
Suppose that $A \subset \{1,\dots,N\}$ contains no non-trivial three-term arithmetic progressions.  Then
\begin{equation*}
|A| = O\left(\frac{N (\log \log N)^5 }{\log N}\right).
\end{equation*}
\end{theorem}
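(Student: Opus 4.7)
The plan is to follow the density-increment strategy which has driven improvements to Roth's theorem since Roth's original 1953 proof. First I would pass from $\{1,\dots,N\}$ to a cyclic group $\mathbb{Z}/N'\mathbb{Z}$ with $N' \sim N$ prime, then work inside Bohr sets (rather than arithmetic progressions) throughout, in order to avoid losing a full logarithmic factor per iteration. The central quantity is the normalised 3-AP count
\[
T(A) = \mathbb{E}_{x,d}\,1_A(x)1_A(x+d)1_A(x+2d),
\]
and if $A$ is 3-AP-free then $T(A) \ll \alpha/N$, far smaller than the random prediction $\alpha^3$ once $\alpha \gg N^{-1/2}$.

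Fourier expansion of $T(A) - \alpha^3$ then forces spectral concentration: the large spectrum
\[
\mathrm{Spec}_\eta(A) = \{\xi \neq 0 : |\widehat{1_A}(\xi)| \geq \eta \alpha\}
\]
is nonempty for some small constant $\eta$. Reading off a \emph{single} large Fourier coefficient and partitioning into subprogressions yields the classical $\alpha \to \alpha(1+c)$ density increment on an AP of length $N^{\Omega(\alpha^2)}$; iterating this only gives $\alpha = O(1/\log\log N)$. To do better I would exploit many large frequencies simultaneously: Chang's theorem shows $\mathrm{Spec}_\eta(A)$ is contained in the dual of a Bohr set of rank $O(\eta^{-2}\log(1/\alpha))$, and a Bogolyubov/Pl\"unnecke type argument then locates a density increment for $A$ on a Bohr set $B(\Gamma,\rho)$ with $|\Gamma|=\mathrm{polylog}(1/\alpha)$ and $\rho=1/\mathrm{polylog}(1/\alpha)$.

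The core obstacle, and the reason the bound in Theorem~\ref{thm.roth} is delicate, is to produce an increment that is simultaneously \emph{multiplicative}, of strength $\alpha \to \alpha(1+\Omega(1))$, and supported on a Bohr set whose rank grows only polylogarithmically in $1/\alpha$. I expect this step to require the Croot--Sisask almost-periodicity machinery: one finds a large set $T$ of translations under which the convolution $1_A * 1_A$ is $L^p$-stable, and $T$ may be taken to lie in a Bohr set of polylogarithmic rank. Iterating this translation invariance converts the spectral information into genuine additive structure on $A$ itself, losing only polylogarithmic factors at each application rather than the polynomial losses incurred by the naive Bogolyubov argument.

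Finally I would iterate. Starting from density $\alpha_0 = |A|/N$ and rank $d_0=1$, each step replaces $(\alpha_i, d_i, \rho_i)$ by $(\alpha_{i+1}, d_{i+1}, \rho_{i+1})$ with
\[
\alpha_{i+1} \geq \alpha_i(1+c), \quad d_{i+1} \leq d_i + \mathrm{polylog}(1/\alpha_i), \quad \rho_{i+1} \geq \rho_i/\mathrm{polylog}(1/\alpha_i).
\]
The number of iterations is $O(\log(1/\alpha_0))$, and the terminal Bohr set must contain more than one integer, which on unwinding the $\mathrm{polylog}$ losses forces $\alpha_0 \ll (\log\log N)^5/\log N$. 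Tracking the precise exponent of $\log\log N$ through the Croot--Sisask step, Chang's theorem and the Bohr-set arithmetic is where the constant $5$ in the theorem will emerge.
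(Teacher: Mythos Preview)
Your outline captures the density-increment-on-Bohr-sets architecture correctly, and you are right that Croot--Sisask almost-periodicity is one of the two decisive inputs. But there is a genuine gap: you are missing the other new ingredient, and without it the numbers do not close.

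The claim that Croot--Sisask applied to $1_A\ast 1_A$ yields approximate translation-invariance on a Bohr set of rank $\mathrm{polylog}(1/\alpha)$ is where the argument breaks. When you run the Croot--Sisask lemma with $f=1_A$ and sample from $A$ (density $\alpha$ inside the ambient Bohr set), the relevant doubling constant is $K\asymp \alpha^{-1}$, and, more importantly, the $L^p$ error you can tolerate against $1_{-A}$ forces $\epsilon\asymp \alpha/\log(1/\alpha)$ (you need $\epsilon\,\|1_A\|_{L^{p'}}\ll \alpha\cdot\alpha$ with $p\sim\log(1/\alpha)$, and the main term you are comparing to is of size $\alpha^2$). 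The sampling set $T$ then has log-density $\epsilon^{-2}p\log K \asymp \alpha^{-2}\,\mathrm{polylog}(1/\alpha)$, and Chang's bound on its spectrum gives a rank increase of the same order. Iterated $O(\log(1/\alpha))$ times this yields only $\alpha^{-2}\,\mathrm{polylog}(1/\alpha)\le \log N$, i.e.\ $\alpha\ll (\log N)^{-1/2+o(1)}$, which is Bourgain's 1999 bound, not the theorem. (Note also that your asserted iteration bounds $d_{i+1}\le d_i+\mathrm{polylog}(1/\alpha_i)$ are strictly stronger than what the theorem itself delivers; had they been available, you would get $\alpha\le\exp(-(\log N)^{c})$, not $(\log\log N)^5/\log N$.)

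What the paper does to bridge this is a Katz--Koester / Dyson $e$-transform step \emph{before} invoking Croot--Sisask: one iteratively builds sets $L,S$ with $L+S\subset A+A'$, where $L$ has relative density $\Omega(1)$ and $S$ has density $(\alpha')^{O(1/\alpha)}$. One then studies $\langle 1_L\ast 1_S,1_{-A}\rangle$ instead of $\langle 1_A\ast 1_{A'},1_{-A}\rangle$. Because $L$ is now \emph{thick}, the tolerable Croot--Sisask error becomes $\epsilon=\Omega(1/\log(1/\alpha))$ rather than $\Omega(\alpha/\log(1/\alpha))$, and the rank increase per step drops to $O(\alpha^{-1}\,\mathrm{polylog}(1/\alpha))$; the factor $\alpha^{-1}$ here comes from $\log\sigma^{-1}$ with $\sigma=(\alpha')^{O(1/\alpha)}$. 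Summed over $O(\log(1/\alpha))$ iterations this gives final rank $O(\alpha^{-1}\log^4(1/\alpha))$ and Bohr-set density $\exp(-O(\alpha^{-1}\log^5(1/\alpha)))$, which is exactly what produces the exponent $5$ on $\log\log N$. Your plan needs this thickening step; Croot--Sisask on $1_A\ast 1_A$ alone is not enough.
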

There are numerous detailed expositions and proofs of Roth's theorem and the many related results, so we shall not address ourselves to a comprehensive history here.  Briefly, the first non-trivial upper bound on the size of such sets was given by Roth \cite{rot::,rot::0}, and there then followed refinements by Heath-Brown \cite{hea::} and Szemer{\'e}di \cite{sze::2}, and later Bourgain \cite{bou::5}, leading up to the above with the power $1/2$ in place of $1$ (up to doubly logarithmic factors).

Bourgain then introduced a new sampling technique in \cite{bou::1} which was refined in \cite{san::01} to give the previous best bound which had a power of $3/4$ (again up to doubly logarithmic factors) in place of $1$.  The methods of this paper, however, are largely unrelated to these last developments.  We do still use the Bohr set technology of Bourgain \cite{bou::5}, but couple this with two new tools: the first is motivated by the arguments of Katz and Koester in \cite{katkoe::} and is a sort of variant of the Dyson $e$-transform; the second is a result on the $L^p$-invariance of convolutions due to Croot and Sisask \cite{crosis::}.

For comparison with these upper bounds, Salem and Spencer \cite{salspe::} showed that the surface of high-dimensional convex bodies can be embedded in the integers to construct sets of size $N^{1-o(1)}$ containing no three-term progressions, and Behrend \cite{beh::} noticed that spheres are a particularly good choice.  Recently Elkin \cite{elk::} tweaked this further by thickening the spheres to produce the largest known progression-free sets, and his argument was then considerably simplified by Green and Wolf in the very short and readable note \cite{grewol::}.

\section{Notation}

Suppose that $G$ is a finite Abelian group.  We write $M(G)$ for the space of measures on $G$ and given a measure $\mu \in M(G)$ and a function $f \in L^1(\mu)$ we write $fd\mu$ for the measure induced by 
\begin{equation*}
C(G) \rightarrow C(G); g \mapsto \int{g(x)f(x)d\mu(x)}.
\end{equation*}
Given a non-empty set $A \subset G$ we write $\mu_A$ for the uniform probability measure supported on $A$, that is the measure assigning mass $|A|^{-1}$ to each $x \in A$, so that $\mu_G$ is Haar probability measure on $G$.  The significance of this measure is that it is the unique (up to scaling) translation invariant measure on $G$: for $x \in G$ and $\mu \in M(G)$ we define $\tau_x(\mu)$ to be the measure induced by
\begin{equation*}
C(G) \rightarrow C(G); g \mapsto \int{g(y)d\mu(y+x)},
\end{equation*}
and it is easy to see that $\tau_x(\mu_G)=\mu_G$ for all $x \in G$. 

Translation can be usefully averaged by convolution: given two measures $\mu,\nu \in M(G)$ define their convolution $\mu \ast \nu$ to be the measure induced by
\begin{equation*}
C(G) \rightarrow C(G); g \mapsto \int{g(x+y)d\mu(x)d\nu(y)}.
\end{equation*}

We use Haar measure to pass between the notion of function $f\in L^1(\mu_G)$ and measure $\mu \in M(G)$.  Indeed, since $G$ is finite we shall often identify $\mu$ with $d\mu/d\mu_G$, the Radon-Nikodym derivate of $\mu$ with respect to $\mu_G$.  In light of this we can easily extend the notion of translation and convolution to $L^1(\mu_G)$: given $f \in L^1(\mu_G)$ and $x \in G$ we define the translation of $f$ by $x$ point-wise by
\begin{equation*}
\tau_x(f)(y):=\frac{d(\tau_x(fd\mu_G))}{d\mu_G}(y) = f(x+y) \textrm{ for all } y \in G;
\end{equation*}
and given $f,g \in L^1(\mu_G)$ we define convolution of $f$ and $g$ point-wise by
\begin{equation*}
f \ast g(x):=\frac{d((fd\mu_G) \ast (g d\mu_G))}{d\mu_G}(x) = \int{f(y)g(x-y)d\mu_G(y)},
\end{equation*}
and similarly for the convolution of $f \in L^1(\mu_G)$ with $\mu \in M(G)$. 

Convolution operators can be written in a particularly simple form with respect to the Fourier basis which we now recall.  We write $\wh{G}$ for the dual group, that is the finite Abelian group of homomorphisms $\gamma:G \rightarrow S^1$, where $S^1:=\{z \in \C:|z|=1\}$.  Given $\mu \in M(G)$ we define $\wh{\mu} \in \ell^\infty(\wh{G})$ by
\begin{equation*}
\wh{\mu}(\gamma):=\int{\overline{\gamma}d\mu} \textrm{ for all } \gamma \in \wh{G},
\end{equation*}
and extend this to $f \in L^1(G)$ by $\wh{f}:=\wh{fd\mu_G}$. It is easy to check that $\wh{\mu \ast \nu} = \wh{\mu}\cdot \wh{\nu}$ for all $\mu,\nu \in M(G)$ and $\wh{f \ast g} = \wh{f} \cdot \wh{g}$ for all $f,g \in L^1(\mu_G)$.

Throughout the paper $C$s will denote absolute, effective, but unspecified constants of size greater than $1$ and $c$s will denote the same of size at most $1$.  Typically the constants will be subscripted according to the result from which they come and superscripted within arguments.

\section{Fourier analysis on Bohr sets}

Fourier analysis on Bohr sets was introduced to additive combinatorics by Bourgain in \cite{bou::5}, and has since become a fundamental tool.  The material is standard so we shall import the results we require from \cite{san::01} without comment; for a more detailed discussion the reader may wish to consult the book \cite{taovu::} of Tao and Vu.

A set $B$ is called a \emph{Bohr set} with \emph{frequency set} $\Gamma \subset \wh{G}$ and \emph{width function} $\delta \in (0,2]^\Gamma$ if
\begin{equation*}
B=\{x \in G: |1-\gamma(x)| \leq \delta_\gamma \textrm{ for
all }\gamma \in \Gamma\}.
\end{equation*}
The size of the set $\Gamma$ is called the \emph{rank} of $B$ and is denoted $\rk(B)$.

There is a natural way of dilating Bohr sets which will be of particular use to us.  Given such a $B$, and $\rho \in \R^+$ we shall write $B_\rho$ for the Bohr set with frequency set $\Gamma$ and width function\footnote{Technically width function $\gamma \mapsto \min\{\rho \delta_\gamma,2\}$.} $\rho\delta$ so that, in particular, $B=B_1$.   

With these dilates we say that a Bohr set $B'$ is a \emph{sub-Bohr set} of another Bohr set $B$, and write $B' \leq B$, if 
\begin{equation*}
B'_\rho \subset B_{\rho} \textrm{ for all } \rho \in \R^+.
\end{equation*}
Finally, we write $\beta_\rho$ for the probability measure induced on $B_\rho$ by $\mu_G$, and $\beta$ for $\beta_1$. 

\subsection{Size and regularity of Bohr sets} The rank of a Bohr set is closely related to its dimension: a Bohr set $B$ is said to be \emph{$d$-dimensional} if
\begin{equation*}
\mu_G(B_{2\rho}) \leq 2^d\mu_G(B_\rho) \textrm{ for all } \rho \in (0,1],
\end{equation*}
and we have the following standard averaging argument, see \cite[Lemma 4.20]{taovu::}.
\begin{lemma}[Dimension of Bohr sets]\label{lem.bohrsize} Suppose that $B$ is a rank $k$ Bohr set.  Then it is $O(k)$-dimensional.
\end{lemma}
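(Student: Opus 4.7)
The plan is to pass to a geometric picture via the characters in $\Gamma$, and then bound $\mu_G(B_{2\rho})/\mu_G(B_\rho)$ by a covering argument on the torus.

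First I would set up the embedding. For $\gamma \in \Gamma$ and $x \in G$, write $\gamma(x) = e^{2\pi i \theta_\gamma(x)}$ with $\theta_\gamma(x) \in (-1/2, 1/2]$. Since $|1 - e^{2\pi i \theta}|$ is comparable to $\|\theta\|_{\mathbb{R}/\mathbb{Z}}$ up to absolute constants, the Bohr set $B_\rho$ is, up to a fixed distortion, the preimage under the group homomorphism
\begin{equation*}
\phi : G \to (\mathbb{R}/\mathbb{Z})^\Gamma, \quad x \mapsto (\theta_\gamma(x))_{\gamma \in \Gamma},
\end{equation*}
of a box $R_\rho$ with side-lengths proportional to $\rho \delta_\gamma$. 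Since $\phi$ is a homomorphism, its fibres have constant size, and so proving the lemma reduces to establishing
\begin{equation*}
|H \cap R_{2\rho}| \leq 2^{O(k)} |H \cap R_\rho|, \quad \text{where } H := \phi(G).
\end{equation*}

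Second, for this lattice-point count, I would observe that $R_{2\rho}$ decomposes into $2^k$ translates of $R_\rho$ by bisecting each of the $k$ coordinate intervals. This yields $|H \cap R_{2\rho}| \leq 2^k \max_t |H \cap (t + R_\rho)|$, so the heart of the matter is to control $|H \cap (t + R_\rho)|$ by $O(1)^k |H \cap R_\rho|$ uniformly in $t$. Here one exploits that $H$ is a subgroup and $R_\rho$ is symmetric about $0$, together with the sumset containment $(H \cap R_\rho) + (H \cap R_\rho) \subseteq H \cap R_{2\rho}$ (itself a direct consequence of the triangle inequality $|1-\gamma(x+y)| \leq |1-\gamma(x)| + |1-\gamma(y)|$). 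An averaging argument via Fubini, or equivalently a Pl{\"u}nnecke--Ruzsa-style manipulation, then transfers counts between translates and supplies the required bound.

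The main obstacle is this last step. A generic translate of $R_\rho$ need not meet $H$ at all, so one cannot bound $|H \cap (t + R_\rho)|$ pointwise in $t$ by a naive volume comparison; one must instead argue on average, leveraging both the symmetry of $R_\rho$ and the group structure of $H$ to absorb all the combinatorial losses into the single exponent $2^{O(k)}$. Once this is done, the inequality $\mu_G(B_{2\rho}) \leq 2^{O(k)} \mu_G(B_\rho)$ for all $\rho \in (0,1]$ follows, which is the content of the lemma.
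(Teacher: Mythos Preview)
The paper does not supply its own proof of this lemma; it simply refers to \cite[Lemma 4.20]{taovu::} and calls it a ``standard averaging argument''. Your overall framework---embed $G$ into the torus $(\mathbb{R}/\mathbb{Z})^\Gamma$ via the frequency set and reduce to a box-covering count for the image subgroup $H=\phi(G)$---is exactly the standard route, and matches what one finds in Tao--Vu.

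Where your execution goes wrong is in the size of the covering boxes. You cover $R_{2\rho}$ by $2^k$ translates $t+R_\rho$ and then propose to bound each $|H\cap(t+R_\rho)|$ by $O(1)^k|H\cap R_\rho|$. But the only structural tool at hand is the difference trick: two points of $H\cap(t+R_\rho)$ differ by an element of $H\cap R_{2\rho}$, which yields only $|H\cap(t+R_\rho)|\le |H\cap R_{2\rho}|$---precisely the quantity you are trying to control. Neither a Fubini average (which governs $\int_t|H\cap(t+R_\rho)|\,dt$ but says nothing about any individual translate relative to the \emph{centred} box) nor a Pl\"unnecke--Ruzsa manipulation (which needs a doubling constant as \emph{input}) breaks this circularity. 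The sumset containment $(H\cap R_\rho)+(H\cap R_\rho)\subseteq H\cap R_{2\rho}$ that you invoke points in the wrong direction for the same reason.

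The fix is small but essential: cover $R_{2\rho}$ by $O(1)^k$ translates of $R_{\rho/2}$ rather than of $R_\rho$. Then any two $H$-points in a common cell $s+R_{\rho/2}$ differ by an element of $H\cap R_\rho$, so $|H\cap(s+R_{\rho/2})|\le |H\cap R_\rho|$ outright. Summing over the $O(1)^k$ cells gives $|H\cap R_{2\rho}|\le O(1)^k|H\cap R_\rho|$ with no circularity, and this is essentially the argument in \cite[Lemma 4.20]{taovu::}.
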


A key observation of \cite{bou::5} was that some Bohr sets behave better than others: a $d$-dimensional Bohr set is said to be \emph{$C$-regular} if
\begin{equation*}
\frac{1}{1+Cd |\eta|} \leq \frac{\mu_G(B_{1+\eta})}{\mu_G(B_1)} \leq 1+Cd|\eta| \textrm{ for all } \eta \textrm{ with } |\eta| \leq 1/Cd.
\end{equation*}
Crucially, regular Bohr sets are plentiful:
\begin{lemma}[Regular Bohr sets]\label{lem.ubreg} There is an absolute constant $C_\mathcal{R}$ such that whenever $B$ is a Bohr set, there is some $\lambda \in [1/2,1)$ such that $B_\lambda$ is $C_\mathcal{R}$-regular.
\end{lemma}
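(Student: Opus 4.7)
The plan is to recast the regularity condition in terms of the nondecreasing function $\psi(t):=\log \mu_G(B_{e^t})$ and then run a Vitali-type covering argument against the dimension bound coming from Lemma \ref{lem.bohrsize}. First I would note that, by Lemma \ref{lem.bohrsize}, $B$ is $d$-dimensional for some $d=O(\rk B)$, and the doubling estimate $\mu_G(B_{2\rho})\leq 2^{d}\mu_G(B_\rho)$ is exactly the statement $\psi(t+\log 2)-\psi(t)\leq d\log 2$ for every $t\in\R$.

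Next I would translate the failure of regularity. If $B_\lambda$ is not $C$-regular, there is some $\eta$ with $|\eta|\leq 1/(Cd)$ such that one of the two ratios $\mu_G(B_{\lambda(1+\eta)})/\mu_G(B_\lambda)$, $\mu_G(B_\lambda)/\mu_G(B_{\lambda(1+\eta)})$ exceeds $1+Cd|\eta|$. Taking logarithms, this supplies an interval $I_\lambda\ni \log\lambda$, with endpoints $\log\lambda$ and $\log(\lambda(1+\eta))$, of length $\ell_\lambda=|\log(1+\eta)|\leq 2|\eta|\leq 2/(Cd)$ across which $\psi$ increases by more than $\log(1+Cd|\eta|)\geq \tfrac12 Cd|\eta|\geq \tfrac14 Cd\,\ell_\lambda$ (using $Cd|\eta|\leq 1$).

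Now I argue by contradiction: suppose no $\lambda\in[1/2,1)$ yields a $C$-regular $B_\lambda$, where $C=C_\mathcal{R}$ is a large absolute constant to be fixed at the end. Each $\lambda$ supplies a bad interval $I_\lambda$ as above, so $\{I_\lambda\}_{\lambda\in[1/2,1)}$ covers $[-\log 2,0)$. Applying the Vitali covering lemma extracts a finite disjoint subfamily $I_{\lambda_1},\dots,I_{\lambda_n}$ whose 5-fold dilates cover $[-\log 2,0)$, so $\sum_i \ell_{\lambda_i}\geq (\log 2)/5$. Each $I_{\lambda_i}$ lies in $[-\log 2-1,1]$ (for $Cd$ not too small), so by monotonicity and disjointness the $\psi$-increments add up inside $\psi(1)-\psi(-\log 2-1)$, which the dimension bound controls by $O(d)$. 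Combined with the per-interval lower bound,
\begin{equation*}
\frac{C_\mathcal{R} d}{4}\cdot\frac{\log 2}{5}\;\leq\;\sum_i\bigl(\psi(\sup I_{\lambda_i})-\psi(\inf I_{\lambda_i})\bigr)\;\leq\;\psi(1)-\psi(-\log 2-1)\;\lesssim\;d.
\end{equation*}
Choosing $C_\mathcal{R}$ to be a sufficiently large absolute constant makes this impossible, so some $\lambda\in[1/2,1)$ must give a $C_\mathcal{R}$-regular $B_\lambda$.

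The main obstacle is purely bookkeeping rather than conceptual: uniformising the upper and lower ratio inequalities into a single Lipschitz-type assertion about $\psi$, confirming that all the bad intervals produced really lie inside a region on which the doubling inequality controls $\psi$, and checking that the resulting $C_\mathcal{R}$ comes out genuinely independent of $\rk B$, $B$ and $G$.
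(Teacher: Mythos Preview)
Your argument is correct and is precisely the ``covering argument due to Bourgain'' that the paper invokes; the paper does not supply its own proof of this lemma but simply cites \cite{bou::5} and \cite[Lemma~4.25]{taovu::}. One small bookkeeping point worth tightening: the dimension bound $\mu_G(B_{2\rho})\leq 2^{d}\mu_G(B_\rho)$ is only asserted for $\rho\in(0,1]$, so rather than housing the bad intervals in $[-\log 2-1,1]$ you should use the sharper window $[-\log 2-2/(C_\mathcal{R}d),\,2/(C_\mathcal{R}d)]$, which keeps all the relevant dilation parameters in $(0,1]$ and still gives $\psi$-variation $O(d)$ --- exactly the issue you already flagged.
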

The result is proved by a covering argument due to Bourgain \cite{bou::5}; for details one may also consult \cite[Lemma 4.25]{taovu::}.  For the remainder of the paper we shall say \emph{regular} for $C_\mathcal{R}$-regular.

\subsection{The large spectrum} Given a probability measure $\mu$, a function $f \in L^1(\mu)$ and a parameter $\epsilon \in (0,1]$ we define the \emph{$\epsilon$-spectrum of $f$ w.r.t. $\mu$} to be the set
\begin{equation*}
\Spec_\epsilon(f,\mu):=\{\gamma \in \wh{G}: |(fd\mu)^\wedge(\gamma)| \geq \epsilon\|f\|_{L^1(\mu)}\}.
\end{equation*}
This definition extends the usual one from the case $\mu=\mu_G$.  We shall need a local version of a result of Chang \cite{cha::0} for estimating the `complexity' or `entropy' of the large spectrum.  

Conceptually the next definition is inspired by the discussion of quadratic rank Gowers and Wolf give in \cite{gowwol::}.  The \emph{$(K,\mu)$-relative entropy} of a set $\Gamma$ is the size of the largest subset $\Lambda\subset \Gamma$ such that
\begin{equation*}
\int{\prod_{\lambda \in \Lambda}{(1+\Re \omega(\lambda)\lambda)}d\mu} \leq \exp(K)  \textrm{ for all } \omega:\Lambda \rightarrow D,
\end{equation*}
where $D:=\{z \in \C:|z|\leq 1\}$.  The definition is essentially relativising the notion of being dissociated (if $\mu=\mu_G$ and $K=0$ it is precisely this), but the reader does not need to have a deep understanding for the purposes of this paper as it is only used to couple the next two results from \cite{san::01} into Lemma \ref{lem.qw}.  
\begin{lemma}[The Chang bound, {\cite[Lemma 4.6]{san::01}}]\label{lem.changbd}  Suppose that $0 \not \equiv f \in L^2(\mu)$.  Then $\Spec_\epsilon(f,\mu)$ has $(1,\mu)$-relative entropy $O( \epsilon^{-2}\log 2\|f\|_{L^2(\mu)}\|f\|_{L^1(\mu)}^{-1})$.
\end{lemma}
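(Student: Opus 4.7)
My plan is to adapt the classical Rudin--Chang spectrum bound to the relative setting, with the $(1,\mu)$-entropy hypothesis taking over the role of dissociation. Let $\Lambda\subset \Spec_\epsilon(f,\mu)$ be a set witnessing the entropy bound. The argument will have three steps: a H\"older reduction, a relative Rudin-type $L^p$ estimate for a carefully chosen character sum, and an optimisation in $p$.

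The first step is to linearise. I would set $\omega_0(\lambda) := (fd\mu)^\wedge(\lambda)/|(fd\mu)^\wedge(\lambda)|$ for $\lambda\in\Lambda$ (well-defined since $\Lambda\subset \Spec_\epsilon(f,\mu)$) and $g := \sum_{\lambda\in\Lambda}\omega_0(\lambda)\lambda$. Unfolding the Fourier transform gives $\int g\overline{f}\,d\mu = \sum_{\lambda\in\Lambda}|(fd\mu)^\wedge(\lambda)| \geq \epsilon|\Lambda|\|f\|_{L^1(\mu)}$, so H\"older's inequality with exponent $p\geq 2$ and conjugate $q$ yields $\epsilon|\Lambda|\|f\|_{L^1(\mu)} \leq \|g\|_{L^p(\mu)}\|f\|_{L^q(\mu)}$.

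The central obstacle is to establish a relative Rudin bound $\|g\|_{L^p(\mu)} = O(\sqrt{p|\Lambda|})$. The key trick is the pointwise inequality $e^u\leq 1+u+u^2$ for $|u|\leq 1$, together with the algebraic identity
\begin{equation*}
1+t\Re(\omega_0(\lambda)\lambda)+t^2 = (1+t^2)\bigl(1+\Re(\omega(\lambda)\lambda)\bigr), \qquad \omega(\lambda):=\frac{t\omega_0(\lambda)}{1+t^2}\in D,
\end{equation*}
valid for any $t\in(0,1]$. Applying the pointwise bound with $u=t\Re(\omega_0(\lambda)\lambda)$, multiplying over $\lambda\in\Lambda$, integrating against $\mu$, and invoking the entropy hypothesis at this $\omega:\Lambda\to D$ gives
\begin{equation*}
\int \exp(t\Re g)\,d\mu \leq (1+t^2)^{|\Lambda|}\cdot e \leq \exp(1+t^2|\Lambda|).
\end{equation*}
A Chernoff argument then produces a sub-Gaussian tail for $\Re g$, and the same reasoning applied to $-ig$ handles $\Im g$; the layer-cake formula converts this into the claimed $L^p$ bound.

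Finally, by log-convexity of $L^r$-norms on the probability space $(G,\mu)$ one has $\|f\|_{L^q(\mu)}\leq \|f\|_{L^1(\mu)}^{1-2/p}\|f\|_{L^2(\mu)}^{2/p}$ since $q\in[1,2]$. Substituting into H\"older and rearranging yields $|\Lambda| = O\bigl(p\epsilon^{-2}(\|f\|_{L^2(\mu)}/\|f\|_{L^1(\mu)})^{4/p}\bigr)$, and taking $p = \max\bigl(2,\,4\log(2\|f\|_{L^2(\mu)}\|f\|_{L^1(\mu)}^{-1})\bigr)$ balances the two factors and gives the stated bound. The principal difficulty is the relative Rudin step: the factorisation above is precisely what allows the entropy hypothesis to be reapplied to the tilted phases $\omega\in D^\Lambda$ after passing from the Riesz product to the exponential moment.
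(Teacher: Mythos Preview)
The paper does not prove this lemma; it is imported verbatim from \cite[Lemma 4.6]{san::01}, so there is no in-paper argument to compare against. Your proposal is the standard Rudin--Chang argument adapted to the relative setting, and it is correct. The one step you gloss over is that applying $e^u\le 1+u+u^2$ with $u=t\Re(\omega_0(\lambda)\lambda)$ gives $1+u+u^2$, not $1+u+t^2$; but since $u^2\le t^2$ this is immediate, and then your factorisation identity applies. The remaining Chernoff/layer-cake conversion and the $L^1$--$L^2$ interpolation for $\|f\|_{L^q(\mu)}$ are routine and go through as you describe. This is exactly the approach one would expect in the cited reference, with the key point being that the entropy hypothesis is stated for \emph{all} $\omega\in D^\Lambda$ precisely so that the tilted phases $\omega=t\omega_0/(1+t^2)$ remain admissible.
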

Low entropy sets of characters are majorised by large Bohr sets, a fact encoded in the following lemma.
\begin{lemma}[{\cite[Corollary 6.4]{san::01}}]\label{lem.dis}
Suppose that $B$ is a regular $d$-dimensional Bohr set and $\Delta$ is a set of characters with $(\eta,\beta)$-relative entropy $k$.  Then there is a Bohr set $B' \leq B$ with
\begin{equation*}
\rk(B') \leq \rk(B)+k \textrm{ and } \mu_{B}(B') \geq (\eta/2dk)^{O(d)}(1/2k)^{O(k)}
\end{equation*}
such that $|1-\gamma(x)| \leq 1/2$ for all $x \in B'$ and $\gamma \in \Delta$.
\end{lemma}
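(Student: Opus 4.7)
The plan is to construct $B'$ by adjoining to the frequency set of $B$ a subset $\Lambda \subset \Delta$ witnessing the entropy bound, and then use maximality of $\Lambda$ to control the remaining characters in $\Delta$. Concretely, pick a maximal $\Lambda \subset \Delta$ satisfying
\begin{equation*}
\int \prod_{\lambda \in \Lambda} (1+\Re \omega(\lambda)\lambda)\,d\beta \leq e^\eta \quad \text{for all } \omega: \Lambda \to D,
\end{equation*}
so that $|\Lambda| \leq k$. Form the auxiliary Bohr set $B''$ with frequency set $\Gamma_B \cup \Lambda$ whose width agrees with that of $B$ (scaled by $\rho_1$) on $\Gamma_B$ and is the constant $\rho_2$ on $\Lambda$, and then apply Lemma \ref{lem.ubreg} to pass to a regular dilate $B' = B''_\lambda$ with $\lambda \in [1/2,1)$. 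By construction $B' \leq B$, $\rk(B') \leq \rk(B)+k$, and for every $\lambda \in \Lambda$ and $x \in B'$ we have $|1-\lambda(x)| \leq \rho_2 \leq 1/2$, provided $\rho_2$ is chosen small enough.

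For the density bound, note that since $B$ is $d$-dimensional we have $\mu_B(B_{\rho_1}) \geq \rho_1^{O(d)}$ from Lemma \ref{lem.bohrsize} by iteration, while intersecting with the $\Lambda$-constraints introduces a further loss of shape $\rho_2^{O(k)}$ coming from the additional $k$ frequencies (again via the standard dimension/averaging estimate of Lemma \ref{lem.bohrsize} applied to $B''$, which is $O(d+k)$-dimensional). The choices $\rho_1 \sim \eta/dk$ and $\rho_2 \sim 1/k$ then yield the quoted density $(\eta/2dk)^{O(d)}(1/2k)^{O(k)}$; the scale $\eta/dk$ is forced by what is needed in the final step below.

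It remains to verify $|1-\gamma(x)| \leq 1/2$ for $\gamma \in \Delta \setminus \Lambda$ on $B'$. Here one uses maximality of $\Lambda$: adjoining $\gamma$ must break the entropy bound, so there exists $\omega$ with $\int \prod_{\lambda \in \Lambda}(1+\Re\omega(\lambda)\lambda)(1+\Re\omega(\gamma)\gamma)\,d\beta > e^\eta$. Expanding the factor involving $\gamma$ and subtracting the bound on the base Riesz product, this gives a lower bound of order $\eta$ on $|\langle \gamma, R_\omega \rangle_{L^2(\beta)}|$, where $R_\omega = \prod_{\lambda \in \Lambda}(1+\Re\omega(\lambda)\lambda)$. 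Since $R_\omega$ is a trigonometric polynomial in $\Lambda$, its spectral mass lies on $\pm 1$-combinations of $\Lambda$, so convolving with $\beta'$ (which has Fourier transform close to $1$ on such combinations by our choice of $\rho_2$) transfers this correlation to $\gamma$ itself, forcing $|\wh{\beta'}(\gamma)| \geq 1-c$ and hence $|1-\gamma(x)| \leq 1/2$ uniformly on $B'$ via a regularity/averaging step (this is where $\rho_1 \sim \eta/dk$ enters).

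The main obstacle is this last transfer step: converting the Riesz-product correlation guaranteed by maximality of $\Lambda$ into pointwise approximate triviality of $\gamma$ on the small Bohr set $B'$, while keeping the width parameters large enough that the density estimate is not destroyed. The balance between these two requirements is precisely what pins down the exponent structure $(\eta/2dk)^{O(d)}(1/2k)^{O(k)}$.
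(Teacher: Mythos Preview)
The paper does not prove this lemma; it is imported verbatim as \cite[Corollary~6.4]{san::01} with no argument given, so there is no in-paper proof to compare against. Your overall strategy --- select a maximal $\Lambda\subset\Delta$ satisfying the Riesz-product bound, adjoin $\Lambda$ to the frequency set of $B$ with widths $\rho_1\sim\eta/dk$ on the old frequencies and $\rho_2\sim 1/k$ on $\Lambda$, and then use maximality to control $\Delta\setminus\Lambda$ --- is the standard route and is what the cited source does; your rank and density bookkeeping is also of the right shape.

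That said, the step you yourself identify as the main obstacle contains a concrete error. From
\begin{equation*}
\int R_\omega\bigl(1+\Re\omega(\gamma)\gamma\bigr)\,d\beta>e^\eta \quad\text{and}\quad \int R_\omega\,d\beta\le e^\eta
\end{equation*}
one obtains only $\int R_\omega\,\Re(\omega(\gamma)\gamma)\,d\beta>0$, not a lower bound of order $\eta$: the gap $e^\eta-\int R_\omega\,d\beta$ can be arbitrarily small. Your ``transfer'' step is built on this nonexistent quantitative input, so as written it does not establish $|\wh{\beta'}(\gamma)|\ge 1-c$. The argument in the cited paper proceeds differently: for $x\in B'$ one notes that $y\mapsto R_\omega^{\Lambda\cup\{\gamma\}}(y+x)$ is again a Riesz product over $\Lambda\cup\{\gamma\}$ with weights $\omega'(\lambda)=\omega(\lambda)\lambda(x)\in D$, applies the entropy bound for $\Lambda$ to $\omega'|_\Lambda$, and combines this with regularity of $B$ at scale $\rho_1\sim\eta/dk$ --- crucially using the $L^1(\beta)$ control $\int R_\omega\,d\beta\le e^\eta$ rather than the useless $\|R_\omega\|_\infty\le 2^{k+1}$ --- to force the $\gamma$-factor $1+\Re(\omega(\gamma)\gamma(x))$ to stay bounded away from zero. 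This comparison of two Riesz-product integrals, carried out without paying a factor of $2^k$, is the substance of the lemma, and your sketch does not supply it.
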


\subsection{The energy increment method}  The final lemma of the section encodes the Heath-Brown-Szemer{\'e}di energy increment technique from \cite{hea::,sze::2} which shows how to get a density increment on a Bohr set from large energy on a large spectrum.
\begin{lemma}\label{lem.qw}
Suppose that $B$ is a regular $d$-dimensional Bohr set, $A \subset B$ has density $\alpha>0$, $B'\subset B_{\rho'}$ is a regular rank $k$ Bohr set, $T \subset B'$ has relative density $\tau$ and
\begin{equation*}
\sum_{\gamma \in\Spec_\eta(1_T,\beta')}{|((1_A-\alpha)1_B)^\wedge(\gamma)|^2} \geq \nu \alpha^2\mu_G(B).
\end{equation*}
Then there is a regular Bohr set $B''$ with
\begin{equation*}
\rk(B'') \leq k+ O(\eta^{-2}\log 2\tau^{-1}) \textrm{ and } \mu_{B'}(B'') \geq \left(\frac{\eta}{2k\log 2\tau^{-1}}\right)^{O(k+\eta^{-2}\log2\tau^{-1})}
\end{equation*}
such that $\|1_A \ast \beta''\|_{L^\infty(\mu_G)} \geq \alpha (1+\Omega(\nu))$ provided ${\rho'} \leq c_{\ref{lem.qw}}\nu\alpha/d$.
\end{lemma}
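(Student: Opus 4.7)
My strategy is to produce a regular sub-Bohr set $B'' \leq B'$ on which every $\gamma \in \Spec_\eta(1_T,\beta')$ acts almost trivially, and then convert the Fourier hypothesis into an $L^2$-excess for $1_A \ast \beta''$ that forces the desired density increment.

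For the construction, I would apply Lemma \ref{lem.changbd} to $1_T$ viewed with respect to $\beta'$: since $\|1_T\|_{L^1(\beta')} = \|1_T\|_{L^2(\beta')}^2 = \tau$, the ratio appearing in Chang's bound is $\tau^{-1/2}$, so the $(1,\beta')$-relative entropy of $\Spec_\eta(1_T,\beta')$ is at most $K := O(\eta^{-2}\log 2\tau^{-1})$. Lemma \ref{lem.bohrsize} gives $B'$ dimension $O(k)$, and feeding both into Lemma \ref{lem.dis} yields a sub-Bohr set $B''' \leq B'$ of rank at most $k + K$ with the claimed relative density, on which $|1-\gamma(x)| \leq 1/2$ for every spectral $\gamma$. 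A regular dilate $B'' := B'''_\lambda$ supplied by Lemma \ref{lem.ubreg} inherits the same property, with only a $2^{-O(k+K)}$ loss absorbed by the stated bound. Since $|1-\gamma(x)| \leq 1/2$ forces $\Re\gamma(x) \geq 7/8$ on $B''$, this gives $|\wh{\beta''}(\gamma)| \geq 7/8$ for each $\gamma \in \Spec_\eta(1_T,\beta')$.

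Setting $f := (1_A-\alpha)1_B$, Parseval applied to the hypothesis with this Fourier bound gives $\|f \ast \beta''\|_{L^2(\mu_G)}^2 \geq \tfrac{49}{64}\nu\alpha^2\mu_G(B)$. Expanding $1_A \ast \beta'' = \alpha (1_B \ast \beta'') + f \ast \beta''$, the width condition $\rho' \leq c_{\ref{lem.qw}}\nu\alpha/d$ combined with regularity of $B$ bounds the annular measure $\mu_G(B_{1+\rho'}\setminus B_{1-\rho'})$ by $O(\nu\alpha)\mu_G(B)$. This pins $\|1_B \ast \beta''\|_{L^2(\mu_G)}^2$ at $\mu_G(B)(1 + O(\nu\alpha))$, and --- via $\int (f \ast \beta'') \, d\mu_G = 0$ together with the $L^1$-closeness of $1_B \ast \beta''$ to $1_B$ --- bounds the cross term $|\langle 1_B \ast \beta'', f \ast \beta''\rangle|$ by $O(\nu\alpha)\mu_G(B)$. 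With $c_{\ref{lem.qw}}$ sufficiently small these combine to $\|1_A \ast \beta''\|_{L^2(\mu_G)}^2 \geq \alpha^2\mu_G(B)(1 + \Omega(\nu))$, and the claim now follows from the inequality $\|g\|_{L^\infty(\mu_G)} \geq \|g\|_{L^2(\mu_G)}^2 / \|g\|_{L^1(\mu_G)}$ (valid for $g \geq 0$) applied to $g := 1_A \ast \beta''$, whose $L^1$-norm equals $\alpha\mu_G(B)$.

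The principal obstacle is the cross term $2\alpha\langle 1_B \ast \beta'', f \ast \beta''\rangle$: a naive Cauchy--Schwarz bound is of size $O(\alpha^{3/2})\mu_G(B)$, which swamps the $\Omega(\nu\alpha^2)\mu_G(B)$ gain from $\|f\ast\beta''\|_{L^2(\mu_G)}^2$. Beating it down requires exploiting both the mean-zero property of $f$ and regularity at the scale $\rho' \leq c_{\ref{lem.qw}}\nu\alpha/d$ --- this interplay is precisely what the width hypothesis on $\rho'$ is designed to allow.
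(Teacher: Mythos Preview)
Your proof is correct and follows essentially the same route as the paper's: apply Chang's bound and Lemma~\ref{lem.dis} to trap the spectrum, deduce $|\wh{\beta''}(\gamma)|=\Omega(1)$ on it, convert the hypothesis into $\|(1_A-\alpha 1_B)\ast\beta''\|_{L^2(\mu_G)}^2=\Omega(\nu\alpha^2\mu_G(B))$ via Parseval, use regularity of $B$ at scale $\rho'$ to replace this by $\|1_A\ast\beta''\|_{L^2(\mu_G)}^2\geq\alpha^2(1+\Omega(\nu))\mu_G(B)$, and finish with H\"older. You are slightly more explicit than the paper in two places---regularising the output of Lemma~\ref{lem.dis} via Lemma~\ref{lem.ubreg}, and isolating the cross term in the $L^2$ expansion---but these are expository rather than substantive differences.
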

\begin{proof} By Lemma \ref{lem.changbd} the set $\Spec_\eta(1_T,\beta')$ has $(1,\beta')$-relative entropy $O(\eta^{-2}\log 2\tau^{-1})$.  The dimension of $B'$ is $O(k)$ so it follows by Lemma \ref{lem.dis} that there is a Bohr set $B'' \leq B'$ with
\begin{equation*}
\rk(B'') \leq k +O(\eta^{-2}\log 2\tau^{-1}) \textrm{ and } \mu_{B'}(B'') \geq \left(\frac{\eta}{2k\log 2\tau^{-1}}\right)^{O(k+\eta^{-2}\log2\tau^{-1})}
\end{equation*}
such that
\begin{equation*}
\Spec_\eta(1_T,\beta') \subset \{\gamma: |1-\gamma(x)| \leq 1/2 \textrm{ for all } x \in B''\}.
\end{equation*}
By the triangle inequality and Parseval's theorem it follows that
\begin{eqnarray*}
\Omega(\nu \alpha^2\mu_G(B)) & = &\sum_{\gamma \in\wh{G}}{|((1_A-\alpha)1_B)^\wedge(\gamma)|^2|\wh{\beta''}(\gamma)|^2}\\ & = & \|(1_A - \alpha1_B)\ast \beta''\|_{L^2(\mu_G)}^2.
\end{eqnarray*}
Since $B'' \leq B' \subset B_{\rho'}$ and $B$ is regular we have that
\begin{equation*}
\|(1_A - \alpha1_B)\ast \beta''\|_{L^2(\mu_G)}^2 = \|1_A \ast \beta''\|_{L^2(\mu_G)}^2 -\alpha^2\mu_G(B) + O(\alpha {\rho'} d\mu_G(B)).
\end{equation*}
It follows that if ${\rho'}$ is sufficiently small then
\begin{equation*}
 \|1_A \ast \beta''\|_{L^2(\mu_G)}^2\geq \alpha^2(1+\Omega(\nu))\mu_G(B)
\end{equation*}
and we get the result by H{\"o}lder's inequality.
\end{proof}

\section{Katz-Koester and the Dyson $e$-transform}\label{sec.kk}

In \cite{katkoe::} Katz and Koester introduced a new way of transforming sumsets.  This method has seen impressive applications in, for example, \cite{sch::1} and \cite{schshk::}, and is particularly ripe for iteration.  The arguments of this section evolved from these Katz-Koester techniques but in their final form may be seen to have more in common with the Dyson $e$-transform (see \emph{e.g.} \cite[\S 5.1]{taovu::}).  In any case, from our perspective what is important is that it provides a sort of density increment without the cost of passing to an approximate subgroup.

Specifically our aim is to transform the set $A$ in Roth's theorem into two sets $L$ and $S$ where $L$ is thick, $S$ is not too thin, and $L+S \subset A-2.A$.  This dovetails with the regime of strength of the results in the next section.

The main idea is to construct such sets $L$ and $S$ iteratively using the Katz-Koester transformation.  Suppose that $L,S,A$ and $A'$ are sets of density $\lambda$, $\sigma$, $\alpha$ and $\alpha'$ respectively and $L+S \subset A+A'$.  Unless $A$ is `quite structured' one expects there to be very few $x$ for which
\begin{equation*}
1_{L} \ast 1_{-A}(x) \geq \alpha/2;
\end{equation*}
on the other hand, by averaging, there are many $x \in G$ such that
\begin{equation*}
1_{-S}\ast 1_{A'}(x) \geq \sigma\alpha'/2.
\end{equation*}
It follows that unless $A$ is `quite structured' one may find an $x \in G$ such that
\begin{equation*}
1_L \ast 1_{-A}(x) \leq \alpha/2 \textrm{ and } 1_{-S}\ast 1_{A'}(x) \geq \sigma\alpha'/2.
\end{equation*}
Now, if we put 
\begin{equation*}
L':= L \cup (x+A) \textrm{ and } S':=S \cap (A'-x),
\end{equation*}
then we have
\begin{equation*}
\mu_G(L') \geq \mu_G(L) + \mu_G(x+A) - 1_L \ast 1_{-A}(x) \geq \lambda + \alpha/2 \textrm{ and } \mu_G(S') \geq \alpha'\sigma/2,
\end{equation*}
and also
\begin{equation*}
L'+S' \subset (L+S') \cup ((x+A) + S')\subset (L+S) \cup (x+A+A'-x) \subset A+A'.
\end{equation*}
We see that unless $A$ is quite structured we have a new pair $(L',S')$ whose sumset is contained in $A+A'$, but for which $L'$ is somewhat larger (than $L$) while $S'$ is not too much smaller (than $S$).

The actual result we require is the following relativised and weighted version of the above.
\begin{proposition}\label{prop.inneritapplied}
Suppose that $B$ is a regular $d$-dimensional Bohr set, $B'$ is a regular rank $k$ Bohr set with $B' \subset B_{\rho'}$,$B''\subset B_{\rho''}'$, $A \subset B$ has relative density $\alpha$ and $A' \subset B'$ has relative density $\alpha'$. Then either
\begin{enumerate}
\item there is a regular Bohr set $B'''$ of rank at most $k+O(\alpha^{-1}\log2\alpha'^{-1})$ with
\begin{equation*}
\mu_{B'}(B''') \geq \left(\frac{\alpha}{2k\log 2\alpha'^{-1}}\right)^{O(k+\alpha^{-1}\log2\alpha'^{-1})}
\end{equation*}
and $\|1_A \ast \beta'''\|_{L^\infty(\mu_G)} \geq \alpha(1+\Omega(1))$;
\item or there are sets $L\subset B$ and $S\subset B''$ with $\beta(L) =\Omega(1)$ and $\beta''(S) \geq (\alpha'/2)^{O(\alpha^{-1})}$ such that
\begin{equation*}
1_L \ast (1_Sd\beta'')(x) \leq C_{\ref{prop.inneritapplied}}\alpha^{-1}\mu_{B'}(B'')^{-1} 1_A \ast (1_{A'}d\beta')(x)
\end{equation*}
for all $x \in G$;
\end{enumerate}
provided $\rho' \leq c_{\ref{lem.innerit}}\alpha/d$ and $\rho'' \leq c_{\ref{lem.innerit}}\alpha'/k$.
\end{proposition}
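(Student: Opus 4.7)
The strategy is to iterate a Katz--Koester / Dyson $e$-transform step of the kind outlined informally at the start of Section~\ref{sec.kk}, in the Bohr-relative setting of the three-scale tower $B\supset B'\supset B''$. I maintain a pair $(L_i,S_i)$ with $L_i\subset B$ and $S_i\subset B''$, together with an inductive convolution domination
\[ 1_{L_i}\ast (1_{S_i}d\beta'')(x) \leq C_i\alpha^{-1}\mu_{B'}(B'')^{-1}\cdot 1_A\ast(1_{A'}d\beta')(x), \]
starting from $L_0=\emptyset$, $S_0=B''$ (so that $C_0=0$). The invariants maintained across the Katz--Koester update $L_{i+1}:=L_i\cup(x_i+A)$, $S_{i+1}:=S_i\cap(A'-x_i)$ are $\beta(L_{i+1})\geq\beta(L_i)+\alpha/2$, $\beta''(S_{i+1})\geq(\alpha'/2)\beta''(S_i)$, and $C_{i+1}\leq C_i+1$. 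Running the iteration for $O(\alpha^{-1})$ steps then produces $L,S$ meeting the conditions of conclusion~(2).

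A single step goes through provided the translate $x_i$ can be chosen to satisfy both
\[ 1_{L_i}\ast 1_{-A}(x_i)\leq \alpha\mu_G(B)/2 \quad\text{and}\quad 1_{-S_i}\ast(1_{A'}d\beta')(x_i)\geq \alpha'\beta''(S_i)/2. \]
The first inequality yields the growth of $L$ by inclusion--exclusion on $L_i\cup(x_i+A)$; the second yields the shrinkage bound on $S$; and the convolution domination passes to the new pair by splitting $L_{i+1}$ as a union and using $S_{i+1}\subset A'-x_i$ to dominate the new block $1_{x_i+A}\ast 1_{S_{i+1}}$ pointwise by $1_A\ast 1_{A'}$. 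Regularity of $B, B'$ enters to absorb the errors from translating $\beta',\beta''$ by $x_i$ in the averaging that follows, hence the hypotheses $\rho'\leq c\alpha/d$ and $\rho''\leq c\alpha'/k$.

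The heart of the argument is showing that such an $x_i$ exists, or else deriving conclusion~(1). Let $T$ be the set of $x$ inside an appropriate regularised Bohr dilate of $B'$ on which the second inequality above holds; averaging the defining function against the normalized measure on that dilate, and using regularity to compare the shifted Bohr densities $\beta',\beta''$, shows that $T$ has relative density $\tau=\Omega(\alpha')$ there. If no $x_i\in T$ meets the first inequality then $1_{L_i}\ast 1_{-A}>\alpha\mu_G(B)/2$ throughout $T$; writing $1_A=(1_A-\alpha 1_B)+\alpha 1_B$ and using $\beta(L_i)\leq 1/2$ (else conclusion~(2) is already in hand) transfers this to
\[ \int (1_A-\alpha 1_B)\ast 1_{-L_i}(x)\cdot 1_T(x)\,d\beta'(x) = \Omega(\alpha\mu_G(B)\beta'(T)). \]
Cauchy--Schwarz against $\|1_{L_i}\|_{L^2(\mu_G)}\leq \mu_G(B)^{1/2}$, Parseval, and a split of the Fourier sum at $\Spec_\eta(1_T,\beta')$ with $\eta\asymp\alpha^{1/2}$ then produce $\sum_{\gamma\in\Spec_\eta(1_T,\beta')}|((1_A-\alpha)1_B)^\wedge(\gamma)|^2=\Omega(\alpha^2\mu_G(B))$, at which point Lemma~\ref{lem.qw} applied with $\nu=\Omega(1)$ delivers conclusion~(1); the rank increment $O(\alpha^{-1}\log 2\alpha'^{-1})$ matches $\eta^{-2}\log 2\tau^{-1}$ for this choice of $\eta,\tau$.

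I expect the main technical obstacle to be the averaging that identifies $T$: pinning down $\tau=\Omega(\alpha')$ in the correct regularised dilate of $B'$ requires interchanging integrations against $\beta',\beta''$ at the nested scales while carefully tracking the regularity errors, which is precisely what the smallness hypotheses on $\rho'$ and $\rho''$ are calibrated to absorb. The balance $\eta\asymp\alpha^{1/2}$, $\tau\asymp\alpha'$ is what produces simultaneously a constant-factor density gain $\nu=\Omega(1)$ and the advertised rank increment $O(\alpha^{-1}\log 2\alpha'^{-1})$ rather than weaker versions.
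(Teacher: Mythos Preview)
Your proposal is correct and follows essentially the same route as the paper: iterate a Katz--Koester step (isolated there as Lemma~\ref{lem.innerit}) that either grows $L$ by $\Omega(\alpha)$ at the cost of shrinking $S$ by a factor $\alpha'/2$, or feeds the spectral energy on $\Spec_\eta(1_T,\beta')$ with $\eta\asymp\alpha^{1/2}$, $\tau\asymp\alpha'$ into Lemma~\ref{lem.qw} to reach conclusion~(1). The only cosmetic differences are that the paper initializes with $S_0=B''\cap(A'-x)$ via a preliminary averaging step (which you absorb into the first iteration by taking $S_0=B''$), and phrases the dichotomy as $\beta'(\mathcal{L})\gtrless\alpha'/8$ for the bad set $\mathcal{L}$ rather than as whether your $T$ is contained in it.
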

The proof is an iteration of the following lemma.
\begin{lemma}\label{lem.innerit}
Suppose that $B$ is a regular $d$-dimensional Bohr set, $B'$ is a regular rank $k$ Bohr set with $B' \subset B_{\rho'}$,$B''\subset B_{\rho''}'$, $A \subset B$ has relative density $\alpha$ and $A' \subset B'$ has relative density $\alpha'$.

If, additionally, there is a set $L \subset B$ of relative density $\lambda$ and $S \subset B''$ of relative density $\sigma$, then either
\begin{enumerate}
\item there is a regular Bohr set $B'''$ of rank at most $k+O(\alpha^{-1}\log2\alpha'^{-1})$ with
\begin{equation*}
\mu_{B'}(B''') \geq \left(\frac{\alpha}{2k\log 2\alpha'^{-1}}\right)^{O(k+\alpha^{-1}\log2\alpha'^{-1})}
\end{equation*}
and $\|1_A \ast \beta'''\|_{L^\infty(\mu_G)} \geq \alpha(1+\Omega(1))$;
\item or there are sets $L'\subset B$ and $S'\subset B''$ with $\beta(L') \geq \lambda + \alpha/4$ and $\beta''(S') \geq \alpha'\sigma/2$ such that
\begin{equation*}
1_{L'} \ast (1_{S'}d\beta'')(x) \leq 1_L \ast (1_Sd\beta'')(x) +  \mu_{B'}(B'')^{-1}1_{A} \ast (1_{A'}d\beta')(x)
\end{equation*}
for all $x \in G$;
\end{enumerate}
provided $\lambda \leq c_{\ref{lem.innerit}}$, $\rho' \leq c_{\ref{lem.innerit}}\alpha/d$ and $\rho'' \leq c_{\ref{lem.innerit}}\alpha'/k$.
\end{lemma}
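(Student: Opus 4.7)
The plan is to run a Katz--Koester dichotomy: either I can locate a ``pivot'' $x_0 \in B'$ which directly produces $(L', S')$ of the desired form, or the failure to do so supplies Fourier energy that Lemma \ref{lem.qw} converts into the density increment of conclusion (i). First I would isolate the $S$-good pivots by setting
\[
U := \{x_0 \in B' : 1_{A'} \ast 1_{-S}(x_0) \geq (\sigma\alpha'/2)\mu_G(B'')\}.
\]
Rewriting $\int_{B'} 1_{A'} \ast 1_{-S}(x_0)\, d\mu_G(x_0) = \int 1_S(w) \mu_G(A' \cap (B' - w))\, d\mu_G(w)$ and using regularity of $B'$ (which has dimension $O(k)$ by Lemma \ref{lem.bohrsize}) together with $S \subset B'' \subset B'_{\rho''}$ and the hypothesis $\rho'' \leq c\alpha'/k$, the average is $(1 - o(1))\alpha'\sigma\mu_G(B'')$, whence $\mu_G(U) \geq \Omega(\alpha')\mu_G(B')$ by a Markov-type comparison. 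The dichotomy is whether some $x_0 \in U$ additionally satisfies $1_L \ast 1_{-A}(x_0) \leq (3\alpha/4)\mu_G(B)$.

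If yes, set $L' := L \cup ((x_0 + A) \cap B) \subset B$ and $S' := S \cap (A' - x_0) \subset B''$. The pointwise conclusion drops out of the elementary $1_{L'}(u) \leq 1_L(u) + 1_A(u - x_0)$ and $1_{S'}(v) = 1_S(v) 1_{A'}(v + x_0)$: convolving (with the change of variable $y \mapsto y + x_0$) gives $1_{L'} \ast 1_{S'}(x) \leq 1_L \ast 1_S(x) + 1_A \ast 1_{A'}(x)$ pointwise, and dividing through by $\mu_G(B'')$ produces the claimed form via $\mu_{B'}(B'')^{-1} = \mu_G(B')/\mu_G(B'')$. For the sizes, $\mu_G(L') \geq \mu_G(L) + \mu_G(A) - \mu_G((x_0 + A) \setminus B) - 1_L \ast 1_{-A}(x_0)$, with the penultimate term bounded by regularity of $B$ through $x_0 + A \subset B_{1 + \rho'}$ (using $\rho' \leq c\alpha/d$), which together with the dichotomy bound yields $\beta(L') \geq \lambda + \alpha/4$; and $x_0 \in U$ immediately forces $\beta''(S') \geq \alpha'\sigma/2$.

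If no (Case B), then $1_L \ast 1_{-A}(x_0) > (3\alpha/4)\mu_G(B)$ throughout $U$. Writing $g := 1_A - \alpha 1_B$ and decomposing
\[
1_L \ast 1_{-A}(x_0) = \alpha \cdot 1_L \ast 1_{-B}(x_0) + \int 1_L(y) g(y - x_0)\, d\mu_G(y),
\]
regularity of $B$ at $x_0 \in B' \subset B_{\rho'}$ gives $1_L \ast 1_{-B}(x_0) = \lambda\mu_G(B) + O(\rho'd)\mu_G(B)$; using $\rho' \leq c\alpha/d$ and the hypothesis $\lambda \leq c_{\ref{lem.innerit}}$ to drive the baseline $\alpha \cdot 1_L \ast 1_{-B}(x_0)$ to $o(\alpha\mu_G(B))$, the branch forces $\int 1_L(y) g(y - x_0)\, d\mu_G(y) \geq (\alpha/2)\mu_G(B)$ for every $x_0 \in U$. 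Integrating against $1_U$, expanding via Parseval and applying Cauchy--Schwarz, and splitting the Fourier side at threshold $\eta = c\sqrt{\alpha}$ (with the non-spectrum piece absorbed using $\|g\|_{L^2(\mu_G)}^2 \leq \alpha\mu_G(B)$) yields
\[
\sum_{\gamma \in \Spec_\eta(1_U, \beta')} |\wh g(\gamma)|^2 \geq \Omega(\alpha^2 \mu_G(B)).
\]
Lemma \ref{lem.qw} applied with $T = U$, relative density $\tau = \Omega(\alpha')$, and $\nu = \Omega(1)$ then supplies the Bohr set $B'''$ of conclusion (i) with the stated rank and measure bounds.

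The principal obstacle is ensuring in Case B that the baseline contribution $\alpha \cdot 1_L \ast 1_{-B}(x_0) \approx \lambda\alpha\mu_G(B)$ does not swamp the branch hypothesis $> (3\alpha/4)\mu_G(B)$: this is exactly the role of the hypothesis $\lambda \leq c_{\ref{lem.innerit}}$, without which the Fourier extraction fails outright. The two regularity tolerances $\rho' \leq c\alpha/d$ and $\rho'' \leq c\alpha'/k$ play the analogous role of making the boundary errors in Case A (controlling $\mu_G((x_0+A) \setminus B)$) and in the first-moment computation defining $U$ both negligible against the main terms.
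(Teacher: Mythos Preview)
Your proof is correct and follows essentially the same Katz--Koester dichotomy as the paper: define the set of $S$-good pivots (your $U$ is the paper's $\mathcal{S}$), and either find a pivot with small $L$--$A$ overlap to build $(L',S')$ directly, or feed the resulting Fourier energy into Lemma~\ref{lem.qw}. The only cosmetic difference is that the paper organises the split by first testing whether $\mathcal{L}:=\{x\in B': 1_{-L}\ast(1_Ad\beta)(-x)\geq\alpha/2\}$ has measure $\geq\alpha'/8$ and uses $\mathcal{L}$ (rather than your $U$) as the spectrum witness in the Fourier case, but the numerology and the invocation of Lemma~\ref{lem.qw} with $\eta\sim\sqrt{\alpha}$ and $\tau=\Omega(\alpha')$ are identical.
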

\begin{proof}
We put
\begin{equation*}
\mathcal{L}:=\{x \in B': 1_{-L} \ast (1_{A}d\beta)(-x) \geq \alpha/2\},
\end{equation*}
and split into two cases.  First, when $\beta'(\mathcal{L})$ is large we shall show that $A$ has a density increment on a Bohr set; secondly, when it is small we shall proceed as per the heuristic at the start of the section.
\begin{case*}$\beta'(\mathcal{L})\geq \alpha'/8$
\end{case*}
\begin{proof}  This is a textbook translation of a physical space condition into a density increment via the Fourier transform.  We consider the inner product
\begin{equation*}
\alpha\beta'(\mathcal{L})/2 \leq \langle 1_{-L} \ast (1_A d\beta),1_{-\mathcal{L}}\rangle_{L^2(\beta')}.
\end{equation*}
By regularity we have that if $\rho'$ is sufficiently small then 
\begin{equation*}
|\langle 1_{-L} \ast \beta,1_{-\mathcal{L}}\rangle_{L^2(\beta')} -\lambda \beta'(\mathcal{L})| \leq \beta'(\mathcal{L})/4.
\end{equation*}
It follows by the triangle inequality that
\begin{equation*}
| \langle 1_{-L} \ast( (1_A -\alpha)d\beta),1_{-\mathcal{L}}\rangle_{L^2(\beta')}| \geq \alpha\beta'(\mathcal{L})(1/4  -\lambda) \geq \alpha \beta'(\mathcal{L}')/8 
\end{equation*}
if $\lambda$ is sufficiently small.  By Fourier inversion and rescaling we then have
\begin{equation*}
\left|\sum_{\gamma \in \wh{G}}{\wh{1_{-L}}(\gamma)(1_A -\alpha1_B)^\wedge(\gamma)\overline{\wh{1_{-\mathcal{L}}d\beta'}(\gamma)}}\right| \geq \alpha \beta'(\mathcal{L})\mu_G(B)/8.
\end{equation*}
By Cauchy-Schwarz and Parseval on the sum of $|\wh{1_{-L}}(\gamma)|^2$ we get that
\begin{equation*}
\sum_{\gamma \in \wh{G}}{|(1_A -\alpha1_B)^\wedge(\gamma)|^2|\wh{1_{-\mathcal{L}}d\beta'}(\gamma)|^2} \geq \alpha^2\beta'(\mathcal{L})^2\mu_G(B)/64.
\end{equation*}
On the other hand, Parseval's theorem tells us that if $\eta:=\sqrt{\alpha}/16$ then
\begin{equation*}
\sum_{\gamma \not\in \Spec_\eta(1_{-\mathcal{L}},\beta')}{|(1_A -\alpha1_B)^\wedge(\gamma)|^2|\wh{1_{-\mathcal{L}}d\beta'}(\gamma)|^2} \leq \alpha^2 \beta'(\mathcal{L})^2\mu_G(B)/16^2.
\end{equation*}
Thus, by the triangle inequality, and since $|\wh{1_{-\mathcal{L}}d\beta'}(\gamma)| \leq \beta'(\mathcal{L})$ we get that
\begin{equation*}
\sum_{\gamma \in \Spec_\eta(1_{-\mathcal{L}},\beta')}{|((1_A -\alpha)1_B)^\wedge(\gamma)|^2} = \Omega(\alpha^2\mu_G(B)).
\end{equation*}
It follows by Lemma \ref{lem.qw} that we are in the first case of the lemma provided $\rho'$ is sufficiently small.
\end{proof}
\begin{case*}$\beta'(\mathcal{L}) \leq \alpha'/8$
\end{case*}
\begin{proof}
First we show that the set
\begin{equation*}
\mathcal{S}:=\{x \in B':(1_{-S}d\beta'') \ast 1_{A'}(x)\geq \alpha' \sigma/2\}
\end{equation*}
is large by averaging.  In particular, 
\begin{equation*}
\beta'(\mathcal{S})\sigma + \alpha'\sigma/2 \geq \int{(1_{-S}d\beta'') \ast 1_{A'}d\beta'} = \int{1_{A'} d((1_{-S}d\beta'')\ast \beta')}.
\end{equation*}
Of course, by regularity we have that
\begin{equation*}
\|(1_{-S}d\beta'') \ast \beta' - \sigma \beta'\| = O(\sigma k \rho''),
\end{equation*}
whence
\begin{equation*}
\beta'(\mathcal{S}) \geq \alpha'/2 - O(k\rho'') \geq \alpha'/4
\end{equation*}
provided $\rho''$ is sufficiently small.  Now, since $\mathcal{L}$ is assumed to be so small there must be some $x \in \mathcal{S} \setminus \mathcal{L}$; we put
\begin{equation*}
L':=L \cup ((x+A)\cap B) \textrm{ and } S':=S \cap (A'-x).
\end{equation*}
Now, $L' \subset B$ and since $x \not \in \mathcal{L}$ we have
\begin{equation*}
\beta(L \cap (x+A)) = \beta((-L) \cap (-x-A)) = 1_{-L} \ast (1_Ad\beta)(-x) \leq \alpha/2.
\end{equation*}
But then
\begin{equation*}
\beta(L') \geq \lambda + \beta((x+A) \cap B) - \alpha/2 \geq \lambda + \alpha/2 - O(d\rho') \geq \lambda +\alpha/4
\end{equation*}
provided $\rho'$ is sufficiently small.  Additionally $S' \subset S \subset B''$ and
\begin{equation*}
\beta''(S') = \beta''(S \cap (A'-x)) =  (1_{-S}d\beta'') \ast 1_{A'} (x) \geq \alpha'\sigma/2,
\end{equation*}
and finally
\begin{eqnarray*}
1_{L'} \ast (1_{S'}d\beta'') &\leq & 1_L \ast (1_{S'}d\beta'') + 1_{x+A} \ast (1_{S'}d\beta'')\\ & \leq & 1_L \ast (1_Sd\beta'') + \mu_{G}(B'')^{-1}1_{x+A} \ast 1_{A'-x}\\ & = & 1_L \ast (1_Sd\beta'') + \mu_{B'}(B'')^{-1}1_{A} \ast (1_{A'}d\beta')
\end{eqnarray*}
as required.
\end{proof}
\end{proof}
\begin{proof}[Proof of Proposition \ref{prop.inneritapplied}]
We produce a sequence of sets $(L_i)_i$ and $(S_i)_i$ iteratively with $L_i \subset B$, $S_i \subset B''$, $\lambda_i:=\beta(L_i)$ and $\sigma_i:=\beta''(S_i)$ such that
\begin{equation}\label{eqn.u}
\lambda_i \geq \alpha i /4 \textrm{ and } \sigma_i \geq (\alpha'/2)^{i+1}
\end{equation}
and
\begin{equation}\label{eqn.v}
1_{L_i} \ast (1_{S_i}d\beta'') \leq i\mu_{B'}(B'')^{-1}1_A \ast (1_{A'}d\beta').
\end{equation}
To initialise the iteration we consider the inner product
\begin{equation*}
\int{1_{A'} \ast \beta''d\beta'} = \alpha' + O(\rho '' k).
\end{equation*}
Thus, if $\rho''$ is sufficiently small then it follows that there is some $x \in B' \subset B$ such that
\begin{equation*}
\beta''(B''\cap (A'-x)) \geq \alpha'/2;
\end{equation*}
We put $L_0:=\emptyset$ and $S_0:=B'' \cap (A'-x)$ and note that this satisfies (\ref{eqn.u}) and (\ref{eqn.v}).

We now repeatedly apply Lemma \ref{lem.innerit}.  If at any point we are in the first case of that lemma then we terminate in the first case here; otherwise we have the sequence as required.  This process terminates after some $i_0=O(\alpha^{-1})$ steps, when $\lambda_i > c_{\ref{lem.innerit}} = \Omega(1)$.  We set $L:=L_{i_0}$ and $S:=S_{i_0}$ and the result is proved.
\end{proof}

\section{A consequence of the Croot-Sisask lemma}\label{sec.bcs}

In light of the previous section, rather than counting three-term progressions by examining the inner product $\langle 1_A \ast 1_A,1_{2.A}\rangle_{L^2(\mu_G)}$, we shall be able to examine (a relativised version of) $\langle 1_L \ast 1_S,1_{2.A}\rangle_{L^2(\mu_G)}$ where $L$ has density $\Omega(1)$, and $S$, of density $\sigma$, is potentially thin but not too thin.  To do this we shall find a Bohr set $B$ such that
\begin{equation}\label{eqn.uq}
 \|1_L \ast \mu_S \ast \beta - 1_L \ast \mu_S\|_{L^p(\mu_G)} \leq \epsilon,
\end{equation}
so that
\begin{equation*}
|\langle 1_L \ast 1_S \ast \beta,1_{2.A} \rangle_{L^2(\mu_G)} - \langle 1_L\ast 1_S,1_{2.A}\rangle_{L^2(\mu_G)}| \leq \epsilon \sigma \|1_{2.A}\|_{L^{p/(p-1)}(\mu_G)}.
\end{equation*}
If the error is small enough this will give rise to a density increment on $B$; to get a sense of how small it needs to be we think of the second term on the left as being typically of size $\mu_G(L)\sigma\alpha=\Omega(\sigma\alpha)$.  Now,
\begin{enumerate}
\item \label{pt.1} if $p=2$ then $\|1_{2.A}\|_{L^{p/(p-1)}(\mu_G)}= \alpha^{1/2}$ and we would need $\epsilon \sim \alpha^{1/2}$ for the error term not to swamp the main term;
\item \label{pt.2} if $p \sim \log \alpha^{-1}$ then $\|1_{2.A}\|_{L^{p/(p-1)}(\mu_G)} \sim \alpha$ so we would only need $\epsilon \sim 1$ for the error term not to swamp the main term.
\end{enumerate}
Of course, which of these two ranges to use depends on how the size of the Bohr set found varies with $p$ and $\epsilon$.  We shall use an argument of Croot and Sisask \cite{crosis::} to show that we can take
\begin{equation}\label{eqn.bq}
\mu_G(B) \geq \exp(-O(\epsilon^{-2}p\log \sigma^{-1}))
\end{equation}
in (\ref{eqn.uq}), and so in particular case (\ref{pt.2}) above leads to a much larger Bohr set.

This argument of Croot and Sisask is an important new approach for studying the $L^p$-invariance of convolutions.  It relies on random sampling in physical space to approximate a convolution by a small number of translates and works for general groups, not just Abelian ones.

We shall now record a version of their result which will be particularly useful to us.  For completeness -- and since it is simple -- we include the proof of the result as well.
\begin{lemma}[Croot-Sisask]\label{lem.cs}  Suppose that $G$ is a finite Abelian group, $f \in L^p(\mu_G)$ and $A,S \subset G$ have $\mu_G(S+A) \leq K\mu_G(A)$.  Then there is an $s \in S$ and a set $T \subset S$ with $\mu_S(T)  \geq (2K)^{-O(\epsilon^{-2}p)}$ such that
\begin{equation*}
\|\tau_t(f \ast \mu_A) - f \ast \mu_A\|_{L^p(\mu_G)} \leq \epsilon \|f\|_{L^p(\mu_G)} \textrm{ for all } t \in T-s.
\end{equation*}
\end{lemma}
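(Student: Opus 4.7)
The plan is to approximate $f \ast \mu_A$ by a random sample and combine the resulting $L^p$-proximity with a pigeonhole argument over $S$ that exploits $\mu_G(S+A) \leq K\mu_G(A)$; this is the original strategy of Croot and Sisask.

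Fix an integer $k = \Theta(p\epsilon^{-2})$ and consider the random function
\begin{equation*}
F_{\mathbf{a}}(x) := \frac{1}{k}\sum_{i=1}^{k} f(x - a_i)
\end{equation*}
indexed by $\mathbf{a}=(a_1,\dots,a_k) \in A^k$ drawn uniformly. For each fixed $x$ the centred random variable $F_{\mathbf{a}}(x) - (f \ast \mu_A)(x)$ is an average of $k$ i.i.d.\ mean-zero terms, and a standard Marcinkiewicz--Zygmund (Rosenthal-type) inequality gives a $p$-th moment bound of order $(Cp/k)^{p/2}$ times that of a single summand. Integrating in $x$ by Fubini and taking $k$ large enough,
\begin{equation*}
\int_{A^k}\|F_{\mathbf{a}}-f\ast\mu_A\|_{L^p(\mu_G)}^{p}\,d\mu_A(a_1)\cdots d\mu_A(a_k) \leq (\epsilon/4)^{p}\|f\|_{L^p(\mu_G)}^{p},
\end{equation*}
so by Markov the set $E:=\{\mathbf{a}\in A^k:\|F_{\mathbf{a}}-f\ast\mu_A\|_{L^p(\mu_G)}\leq \epsilon\|f\|_{L^p(\mu_G)}/2\}$ has size at least $|A|^k/2$.

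The pigeonhole step considers the map $\Phi:S\times E \to (S+A)^k$ given by $\Phi(s,\mathbf{a}):=(a_1+s,\dots,a_k+s)$. The hypothesis gives $|(S+A)^k|\leq K^k|A|^k$, so some $\mathbf{b}\in(S+A)^k$ has at least $|S||E|/(K^k|A|^k)\geq |S|/(2K^k)$ preimages. Let $T\subset S$ collect the first coordinates of these preimages, so $\mu_S(T)\geq (2K)^{-O(k)}=(2K)^{-O(\epsilon^{-2}p)}$. For each $s\in T$ the associated $\mathbf{a}_s:=(b_1-s,\dots,b_k-s)$ lies in $E$, and $F_{\mathbf{a}_s}(x)=F_{\mathbf{b}}(x+s)=\tau_s F_{\mathbf{b}}(x)$ by direct substitution. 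Translation-invariance of $L^p(\mu_G)$ then rewrites membership in $E$ as
\begin{equation*}
\|F_{\mathbf{b}}-\tau_{-s}(f\ast\mu_A)\|_{L^p(\mu_G)} \leq \epsilon\|f\|_{L^p(\mu_G)}/2 \textrm{ for every } s\in T.
\end{equation*}

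Fixing any $s\in T$ and applying the triangle inequality to the previous display at $s$ and at any $s''\in T$ yields $\|\tau_{-s}(f\ast\mu_A)-\tau_{-s''}(f\ast\mu_A)\|_{L^p(\mu_G)}\leq \epsilon\|f\|_{L^p(\mu_G)}$; by translation-invariance this is equivalent to $\|f\ast\mu_A - \tau_{s''-s}(f\ast\mu_A)\|_{L^p(\mu_G)}\leq \epsilon\|f\|_{L^p(\mu_G)}$, and as $s''$ ranges over $T$ this is the conclusion for all $t\in T-s$. The non-routine ingredient is the Marcinkiewicz--Zygmund bound of order $(Cp/k)^{p/2}$ on the centred average: the square-root dependence on $p/k$ is exactly what forces $k=O(p\epsilon^{-2})$ rather than $\epsilon^{-p}$, and is thus responsible for the $(2K)^{-O(\epsilon^{-2}p)}$ density appearing in the statement; everything else is Fubini together with a clean pigeonhole.
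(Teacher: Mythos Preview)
Your argument is correct and follows the same Croot--Sisask strategy as the paper: random sampling with a Marcinkiewicz--Zygmund bound to get a dense set of good tuples in $A^k$, followed by a pigeonhole over diagonal shifts by $S$ inside $(S+A)^k$. The only cosmetic difference is that the paper phrases the pigeonhole step via Cauchy--Schwarz on $\|1_L \ast \mu_\Delta\|_{L^2(\mu_{G^k})}^2$ rather than your direct fibre count on $\Phi:S\times E\to (S+A)^k$, but these are equivalent formulations of the same step.
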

\begin{proof}
Let $z_1,\dots,z_k$ be independent uniformly distributed $A$-valued random variables, and for each $y \in G$ define $Z_i(y):=\tau_{-z_i}(f)(y) - f \ast \mu_A(y)$.  For fixed $y$, the variables $Z_i(y)$ are independent and have mean zero, so it follows by the Marcinkiewicz-Zygmund inequality, with constants due to Yao-Feng and Han-Ying \cite[Theorem 2]{yaohan::}, that
\begin{equation*}
\| \sum_{i=1}^k{Z_i(y)}\|_{L^p(\mu_A^k)}^p \leq  O(p)^{p/2}k^{p/2-1}\sum_{i=1}^k{\int{|Z_i(y)|^p}d\mu_A^k}.
\end{equation*}
Integrating over $y$ and interchanging the order of summation we get
\begin{equation}\label{eqn.khin}
\int{\| \sum_{i=1}^k{Z_i(y)}\|_{L^p(\mu_A^k)}^pd\mu_G(y)} \leq O(p)^{p/2}k^{p/2-1}\int{\sum_{i=1}^k{\int{|Z_i(y)|^p}d\mu_G(y)}d\mu_{A}^k}.
\end{equation}
On the other hand,
\begin{equation*}
\left(\int{|Z_i(y)|^pd\mu_G(y)}\right)^{1/p} =\|Z_i\|_{L^p(\mu_G)} \leq \|\tau_{-z_i}(f)\|_{L^p(\mu_G)}^p + \|f \ast \mu_A\|_{L^p(\mu_G)} \leq 2\|f\|_{L^p(\mu_G)}
\end{equation*}
by the triangle inequality.  Dividing (\ref{eqn.khin}) by $k^p$ and inserting the above and the expression for the $Z_i$s we get that
\begin{equation*}
\int{\int{\left|\frac{1}{k}\sum_{i=1}^k{\tau_{-z_i}(f)(y)} - f \ast \mu_A(y)\right|^pd\mu_G(y)}d\mu_A^k(z)}=O(pk^{-1}\|f\|_{L^p(\mu_G)}^2)^{p/2}.
\end{equation*}
Pick $k=O(\epsilon^{-2}p)$ such that the right hand side is at most $(\epsilon \|f\|_{\ell^p(G)}/4)^p$ and write $L$ for the set of $x=(x_1,\dots,x_k) \in A^k$ for which the integrand above is at most $(\epsilon \|f\|_{\ell^p(G)}/2)^p$; by averaging $\mu_A^k(L^c) \leq 2^{-p}$ and so $\mu_A^k(L) \geq 1-2^{-p} \geq 1/2$.

Now, $\Delta:=\{(s,\dots,s): s \in S\}$ has $L+\Delta  \subset (A+S)^k$, whence $\mu_{G^k}(L + \Delta) \leq 2K^k\mu_{G^k}(L)$ and so
\begin{equation*}
\langle \mu_\Delta \ast \mu_{-\Delta},1_{-L} \ast 1_{L}\rangle_{L^2(\mu_{G^k})} = \|1_L \ast \mu_\Delta \|_{L^2(\mu_{G^k})}^2\geq \mu_{G^k}(L)/2K^k,
\end{equation*}
by the Cauchy-Schwarz inequality since the adjoint of $g \mapsto 1_{L} \ast g$ is $g \mapsto 1_{-L} \ast g$ and similarly for $g \mapsto g \ast \mu_\Delta$.

By averaging it follows that at least $1/2K^k$ of the pairs $(z,y) \in \Delta^2$ have $1_{-L} \ast 1_{L}(z-y)>0$, and hence there is some $s \in S$ such that there is a set $T\subset S$ with $\mu_S(T) \geq 1/2K^k$ and $1_{-L} \ast 1_{L}(t,\dots,t)>0$ for all $t \in T-s$. 

Thus for each $t \in T-s$ there is some $z(t) \in L$ and $y(t) \in L$ such that $y(t)_i=z(t)_i+t$ for all $i$.  But then by the triangle inequality we get that
\begin{eqnarray*}
\|\tau_{-t}(f \ast \mu_A) - f \ast \mu_A\|_{L^p(\mu_G)}& \leq &\|\tau_{-t}\left(\frac{1}{k}\sum_{i=1}^k{\tau_{-z(t)_i}(f)}\right) -  f \ast \mu_A\|_{L^p(\mu_G)}\\&&+\|\tau_{-t}\left(\frac{1}{k}\sum_{i=1}^k{\tau_{-z(t)_i}(f)} - f \ast \mu_A \right)\|_{L^p(\mu_G)}.
\end{eqnarray*}
However, since $\tau_t$ is isometric on $L^p(\mu_G)$ we see that
\begin{eqnarray*}
\|\tau_t(f \ast \mu_A) - f \ast \mu_A\|_{L^p(\mu_G)} &\leq & \|\frac{1}{k}\sum_{i=1}^k{\tau_{-y(t)_i}(f)}-  f \ast \mu_A\|_{L^p(\mu_G)}\\&&+\|\frac{1}{k}\sum_{i=1}^k{\tau_{-z(t)_i}(f)} - f \ast \mu_A\|_{L^p(\mu_G)},
\end{eqnarray*}
and we are done since $z(t),y(t) \in L$.
\end{proof}
The quantitatively weaker arguments of \cite{bou::4}, and the usual Bogolyubov-Chang argument in the case $p=2$ actually endow $T$ with the structure of a Bohr set, while the set we found has, a priori, no structure.  Croot and Sisask noted that this could, to some degree, be recovered by taking repeated sumsets, and we shall couple this idea with Chang's theorem to get the necessary strength in our corollary.

This may sound like we can't have gained anything over the usual multi-sum version of the Bogolyubov-Chang argument.  However, we do get some extra strength  from the fact that we are in some sense able to increase the number of summands without decreasing the (higher order) additive energy or having the individual summands become too thin.  A similar sort of observation is exploited by Schoen in \cite{sch::1} (see also \cite{cwasch::}) for the purpose of proving a remarkable Fre{\u\i}man-type theorem.
\begin{corollary}\label{cor.cs} Suppose that $B$ is a regular $d$-dimensional Bohr set, $B' \subset B_{{\rho'}}$ is a regular rank $k$ Bohr set, $L,A \subset B$ have relative densities $\lambda$ and $\alpha$ respectively, $S \subset B'$ has relative density $\sigma$.  Then either
\begin{enumerate}
\item (large inner product)
\begin{equation*}
\langle 1_L \ast (1_Sd\beta'),1_{A} \rangle_{L^2(\beta)} \geq \lambda \sigma \alpha/2;
\end{equation*}
\item (density increment) or there is a regular Bohr set $B'''$ and an 
\begin{equation*}
m=O(\lambda^{-2}(\log2\lambda^{-1}\alpha^{-1})^2(\log 2\alpha^{-1})(\log 2\sigma^{-1}))
\end{equation*}
with $\rk(B''') \leq k + m$ and $\mu_{B'}(B''') \geq (1/2km)^{O(k+m)}$ such that $\|1_A \ast \beta'''\|_{L^\infty(\mu_G)} \geq \alpha(1+\Omega(\lambda))$;
\end{enumerate}
provided ${\rho'} \leq c_{\ref{cor.cs}}\lambda\alpha/d$.
\end{corollary}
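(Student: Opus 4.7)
The plan is to show that if case~(1) fails then case~(2) holds. Write $g:=1_L\ast(1_S\,d\beta')$, so $\int g\,d\mu_G=\lambda\sigma\mu_G(B)$, and by the regularity of $B$ together with $\rho'\leq c_{\ref{cor.cs}}\lambda\alpha/d$, almost all of this mass lies inside $B$; hence $\langle g,1_B\rangle_{L^2(\mu_G)}=\lambda\sigma\mu_G(B)(1+O(d\rho'))$. The failure of case~(1) then forces
\begin{equation*}
|\langle g,\,1_A-\alpha 1_B\rangle_{L^2(\mu_G)}|=\Omega(\alpha\lambda\sigma\mu_G(B)).
\end{equation*}
The strategy is to find a Bohr measure $\beta'''$ on which $g$ is approximately $L^p$-invariant under convolution, transfer this correlation to $(1_A-\alpha 1_B)\ast\beta'''$, and then use $|\langle g,h\rangle|\leq\|g\|_{L^1(\mu_G)}\|h\|_{L^\infty(\mu_G)}$ with $\|g\|_{L^1(\mu_G)}=\lambda\sigma\mu_G(B)$ to deduce $\|(1_A-\alpha 1_B)\ast\beta'''\|_{L^\infty(\mu_G)}=\Omega(\alpha\lambda)$, from which a standard sign/averaging step yields $\|1_A\ast\beta'''\|_{L^\infty(\mu_G)}\geq\alpha(1+\Omega(\lambda))$.

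Pick $p$ a constant multiple of $\log 2\alpha^{-1}$ and $\epsilon$ a small constant multiple of $\lambda$, tuned so that the H\"older error $\sigma\epsilon\|1_L\|_{L^p(\mu_G)}\|1_A-\alpha 1_B\|_{L^{p'}(\mu_G)}$ is strictly less than the deficit above. Apply Lemma~\ref{lem.cs} to $f:=1_L$ taking the ``convolution set'' to be $S$ (so the function controlled is exactly $1_L\ast\mu_S=\sigma^{-1}g$) and the ``translation set'' to be $B'$; the $O(k)$-dimensionality of $B'$ gives $\mu_G(B'+S)/\mu_G(S)\leq 2^{O(k)}/\sigma$, yielding $s\in B'$ and $T\subset B'$ with $\tau:=\beta'(T)$ satisfying $\log 2\tau^{-1}=O(\lambda^{-2}(\log 2\alpha^{-1})(k+\log 2\sigma^{-1}))$, and
\begin{equation*}
\|\tau_t g-g\|_{L^p(\mu_G)}\leq\sigma\epsilon\|1_L\|_{L^p(\mu_G)}\quad\text{for all }t\in T-s.
\end{equation*}

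To upgrade the pointwise invariance on the discrete set $T-s$ to convolution invariance by a Bohr measure, apply Lemma~\ref{lem.changbd} to $1_T\in L^2(\beta')$ at parameter $\eta$ of order $\lambda$: the set $\Spec_\eta(1_T,\beta')$ has $(1,\beta')$-relative entropy $O(\lambda^{-2}\log 2\tau^{-1})$. Lemma~\ref{lem.dis} then furnishes a regular $B'''\leq B'$ of rank and $\mu_{B'}$-density matching the conclusion, on which every $\gamma\in\Spec_\eta(1_T,\beta')$ satisfies $|1-\gamma(x)|\leq 1/2$. A short Fourier computation, using that $\wh{\beta'''}$ is essentially $1$ on the characters carrying the mass of $\wh{\mu_T}$, transfers the invariance to $\|g\ast\beta'''-g\|_{L^p(\mu_G)}=O(\sigma\lambda\|1_L\|_{L^p(\mu_G)})$, and the density-increment conclusion then follows from the first paragraph. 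The delicate step is this last transfer from the discrete set $T-s$ to $\beta'''$: one has to balance the Croot--Sisask parameters $(\epsilon,p)$ against the Chang parameter $\eta$ --- probably interleaving an iterated-sumset step of the sort alluded to in the paragraph preceding the statement, to replace a naive $\lambda^{-4}$ by a $\lambda^{-2}(\log\cdot)^2$ --- so that $m$ ends up of the advertised form $O(\lambda^{-2}(\log 2\lambda^{-1}\alpha^{-1})^2(\log 2\alpha^{-1})(\log 2\sigma^{-1}))$.
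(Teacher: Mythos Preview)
Your overall architecture---Croot--Sisask to get approximate invariance, then Chang-type spectral control, then a density increment---matches the paper's, but two of the load-bearing steps do not work as you have written them, and a third is set up with the wrong parameters.

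\textbf{The doubling constant.} Taking the ``translation set'' in Lemma~\ref{lem.cs} to be $B'$ itself forces $K=\mu_G(B'+S)/\mu_G(S)$, and since $S\subset B'$ this is governed by $|B'+B'|/|B'|=2^{O(k)}$, not by $\sigma$ alone. The resulting $\log 2\tau^{-1}$ then carries a factor of $k$, so your $m$ depends on $k$; the statement does not allow this, and in the eventual iteration $k$ grows like $\alpha^{-1}(\log 2\alpha^{-1})^4$, which would be fatal. The paper avoids this by first passing to a dilate $B'':=B'_{\rho''/2l}$ with $\rho''=\Omega(1/k)$, so that $|B''+S|\leq |B''+B'|\leq 2|B'|\leq 2\sigma^{-1}|S|$; the translation set is $B''$, the doubling constant is $2\sigma^{-1}$, and $k$ never enters $\log 2\tau^{-1}$.

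\textbf{The transfer to $\beta'''$.} Your ``short Fourier computation'' asserts that from $L^p$-almost-invariance of $g$ on $T-s$ and the fact that $\wh{\beta'''}\approx 1$ on $\Spec_\eta(1_T,\beta')$, one gets $\|g\ast\beta'''-g\|_{L^p}$ small. There is no such computation: Fourier methods control $L^2$, not $L^p$, and knowing where $\wh{\mu_T}$ is large does not let you replace $\mu_{T-s}$ by $\beta'''$ in an $L^p$ estimate. The paper never attempts this. Instead it \emph{first} spends the $L^p$ gain via H\"older against $1_A$ (this is the only place $p\sim\log\alpha^{-1}$ is used), obtaining a scalar inequality $|\langle g\ast f,\,(1_A-\alpha)1_B\rangle|\gtrsim\lambda\sigma\alpha\,\mu_G(B)$ with $f=\mu_T^{*l}\ast\mu_{-T}^{*l}$; \emph{then} it unfolds by Fourier inversion, applies Cauchy--Schwarz and Parseval, and lands on an $L^2$ energy estimate
\[
\sum_{\gamma}|\wh{\mu_T}(\gamma)|^{4l}\,\bigl|((1_A-\alpha)1_B)^\wedge(\gamma)\bigr|^2\ \gtrsim\ \lambda\alpha^2\mu_G(B),
\]
to which Lemma~\ref{lem.qw} applies directly. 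No $L^p$-invariance under $\beta'''$ is ever needed.

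\textbf{The iterated sumset is the mechanism, not a tweak.} You flag it only as a way to improve $\lambda^{-4}$ to $\lambda^{-2}(\log\cdot)^2$, but its actual role is structural: taking $l\sim\log 2\lambda^{-1}\alpha^{-1}$ puts the power $|\wh{\mu_T}|^{4l}$ into the energy sum above, so one may truncate to $\Spec_\eta(1_T,\beta'')$ with $\eta=(\lambda\alpha)^{1/2l}/16^{1/l}=\Omega(1)$. It is this $\eta=\Omega(1)$ that makes the Chang entropy in Lemma~\ref{lem.qw} equal to $O(\log 2\tau^{-1})$ rather than $O(\lambda^{-2}\log 2\tau^{-1})$; the price is that $\epsilon$ in Croot--Sisask must shrink to $\lambda/8el$, which is exactly where the $(\log 2\lambda^{-1}\alpha^{-1})^2$ in $m$ comes from. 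Your choice $\eta\sim\lambda$ with $\epsilon\sim\lambda$ cannot reach the stated $m$.
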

\begin{proof}
We can certainly assume that all of $\lambda,\alpha$ and $\sigma$ are positive and to begin we set some parameters, the choices for which will become apparent later:
\begin{equation*}
l:=\lceil \log 2\lambda^{-1}\alpha^{-1}\rceil, p:=2+\log\alpha^{-1} \textrm{ and } \epsilon:=\lambda/8el.
\end{equation*}
The Bohr set $B'$ has dimension $O(k)$, whence we may pick $\rho''=\Omega(1/k)$ such that $B'':=B'_{\rho''/2l}$ is regular and $\mu_G(B''+B') \leq 2\mu_G(B')$.  Then
\begin{equation*}
|B''+S| \leq |B''+B'| \leq 2|B'| \leq 2\sigma^{-1}|S|,
\end{equation*}
and we apply Lemma \ref{lem.cs} to the sets $S$, $B''$ and the function $1_L$ respectively\footnote{So that $A$ is $S$, and $S$ is $B''$.} with parameters $p$ and $\epsilon$.  We get that there is an $s \in B''$ and a set $T \subset B''$ with $\beta''(T) \geq (\sigma/2)^{O(p\epsilon^{-2})}$ such that
\begin{equation*}
\|\tau_t(1_L \ast (1_S d\beta'))-  1_L\ast (1_Sd\beta')\|_{L^{p}(\mu_G)} \leq \epsilon \sigma \|1_L\|_{L^p(\mu_G)} \textrm{ for all } t \in T-s.
\end{equation*}
Of course
\begin{eqnarray*}
\|\tau_t(1_L \ast (1_S d\beta'))-  1_L\ast (1_Sd\beta')\|_{L^{p}(\beta)}^p& \leq &\frac{1}{\mu_G(B)}\int{ |\tau_t(1_L \ast (1_S d\beta'))-  1_L\ast (1_Sd\beta')|^pd\mu_G}\\ & \leq &  \epsilon^p \sigma^p \beta(L) \leq \epsilon^p \sigma^p,
\end{eqnarray*}
whence
\begin{equation*}
\|\tau_t(1_L \ast (1_S d\beta'))-  1_L\ast (1_Sd\beta')\|_{L^{p}(\beta)} \leq \epsilon \sigma \textrm{ for all } t \in T-s.
\end{equation*}
It follows by the triangle inequality that
\begin{equation*}
\|\tau_t(1_L \ast (1_S d\beta'))-  1_L\ast (1_Sd\beta')\|_{L^{p}(\beta)} \leq 2l\epsilon \sigma \textrm{ for all } t \in l(T-T).
\end{equation*}
Integrating and applying the triangle inequality again we get
\begin{equation*}
\|1_L \ast (1_S d\beta') \ast f-  1_L\ast (1_Sd\beta')\|_{L^{p}(\beta)} \leq 2l\epsilon\sigma
\end{equation*}
where $f:=\mu_T \ast \dots \ast \mu_T \ast \mu_{-T} \ast \dots \ast \mu_{-T}$ and there are $l$ copies of $\mu_T$ and $l$ copies of $\mu_{-T}$.  By H{\"o}lder's inequality we have
\begin{equation*}
|\langle 1_L \ast (1_S d\beta') \ast f ,1_{A}\rangle_{L^2(\beta)} - \langle 1_L\ast (1_Sd\beta') ,1_{A}\rangle_{L^2(\beta)}| \leq 2l\epsilon\sigma \|1_{A}\|_{L^{p/(p-1)}(\beta)} \leq \lambda\sigma \alpha/4.
\end{equation*}
It follows that we are either in the first case of the corollary or else
\begin{equation*}
\langle f \ast (1_Sd\beta') \ast 1_L ,1_{A}\rangle_{L^2(\beta)} \leq 3\lambda\sigma\alpha/4,
\end{equation*}
which we assume from hereon.

Now, $\supp f \subset 2lB'' \subset B_{\rho''}'\subset B_{\rho'}$ so
\begin{equation*}
\int{1_L \ast (1_S d\beta') \ast fd\beta} = \lambda \sigma + O({\rho'} d\sigma ),
\end{equation*}
whence
\begin{equation*}
|\langle 1_L\ast(1_Sd\beta') \ast f ,1_{A}-\alpha \rangle_{L^2(\beta)}| \geq \lambda \sigma\alpha/8
\end{equation*}
provided ${\rho'}$ is sufficiently small.  We now apply Fourier inversion to get that
\begin{equation*}
\left|\sum_{\gamma \in \wh{G}}{|\wh{\mu_T}(\gamma)|^{2l}\wh{1_Sd\beta'}(\gamma)\wh{1_L}(\gamma)\overline{(1_{A} - \alpha 1_B)^\wedge(\gamma)}}\right| \geq \lambda \sigma \alpha \mu_G(B)/8.
\end{equation*}
By the Cauchy-Schwarz inequality and the Hausdorff-Young inequality (in the trivial case which ensures $|\wh{1_Sd\beta'}(\gamma)| \leq \sigma$) we see that
\begin{equation*}
\sigma \left(\sum_{\gamma \in \wh{G}}{|\wh{1_L}(\gamma)|^2}\right)^{1/2}\left(\sum_{\gamma \in \wh{G}}{|\wh{\mu_T}(\gamma)|^{4l}|(1_{A} - \alpha 1_B)^\wedge(\gamma)|^2}\right)^{1/2} \geq \lambda\sigma \alpha \mu_G(B)/8.
\end{equation*}
Parseval's theorem tells us that the first sum is $\lambda\mu_G(B)$ and so
\begin{equation*}
\sum_{\gamma \in \wh{G}}{|\wh{\mu_T}(\gamma)|^{4l}|(1_{A} - \alpha 1_B)^\wedge(\gamma)|^2}\geq \lambda\alpha^2 \mu_G(B)/64.
\end{equation*}
We put $\eta:=(\lambda \alpha)^{1/2l}/16^{1/l}=\Omega(1)$ and since $\mu_T = 1_Td\beta''$ note, by the triangle inequality, that
\begin{equation*}
\sum_{\gamma \in  \Spec_\eta(1_T,\beta'')}{|(1_{A} - \alpha 1_B)^\wedge(\gamma)|^2}=\Omega( \lambda\alpha^2 \mu_G(B)).
\end{equation*}
The corollary is completed by Lemma \ref{lem.qw} provided ${\rho'}$ is sufficiently small.
\end{proof}

\section{Proof of the main theorem}

We shall now prove the following theorem from which our main result follows by the usual Fre{\u\i}man embedding.
\begin{theorem}\label{thm.main}
Suppose that $G$ is a group of odd order, and $A \subset G$ has density $\alpha>0$.  Then
\begin{equation*}
\langle 1_A \ast 1_{-2.A},1_{-A} \rangle_{L^2(\mu_G)} = \exp(-O(\alpha^{-1}\log^{5}2\alpha^{-1})).
\end{equation*}
\end{theorem}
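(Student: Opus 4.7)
The plan is to run a density-increment iteration on Bohr sets, alternating Proposition \ref{prop.inneritapplied} and Corollary \ref{cor.cs}. Starting from $B_0 = G$ and $A_0 = A$ with $\alpha_0 = \alpha$, at step $i$ we maintain a regular Bohr set $B_i$ of rank $k_i$ and dimension $d_i = O(k_i)$, together with a translate $A_i = (A - t_i) \cap B_i$ of relative density $\alpha_i \geq \alpha$. Since the form $a_1 - 2a_2 + a_3$ is translation-invariant, 3-APs inside $A_i$ inject into 3-APs in $A$, so a lower bound on the localised count $\langle 1_{A_i} \ast 1_{-2.A_i}, 1_{-A_i}\rangle_{L^2(\mu_G)}$ at any terminating step will suffice.

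To execute one step, use Lemma \ref{lem.ubreg} to pick regular nested sub-Bohr sets $B' \subset (B_i)_{\rho'}$ and $B'' \subset B'_{\rho''}$ with $\rho'$ of order $\alpha/d_i$ and $\rho''$ of order $\alpha/k_i$, small enough to satisfy every smallness hypothesis below. Using that $|G|$ is odd (so $2$ is invertible), a first-moment argument produces a \emph{common} shift $t$ for which both $A' := (-2.A + t) \cap B'$ has relative density $\Omega(\alpha)$ in $B'$ and $A'' := (-A + t) \cap B_i$ has relative density $\Omega(\alpha)$ in $B_i$; using the same shift is the key point, because it ensures that the ambient AP equation $a_1 - 2a_2 + a_3 = 0$ is preserved at the end of the chain. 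Feed the pair $(A_i, A')$ into Proposition \ref{prop.inneritapplied}: if we land in its density-increment alternative, replace $B_i$ by the resulting $B'''$ and proceed to step $i+1$ with $\alpha_{i+1} \geq \alpha_i(1+\Omega(1))$. Otherwise we obtain $L \subset B_i$ with $\beta_i(L) = \Omega(1)$ and $S \subset B''$ with $\sigma := \beta''(S) \geq (\alpha/2)^{O(\alpha^{-1})}$ satisfying
\begin{equation*}
1_L \ast (1_S d\beta'')(x) \leq C_{\ref{prop.inneritapplied}}\alpha_i^{-1}\mu_{B'}(B'')^{-1}\, 1_{A_i} \ast (1_{A'} d\beta')(x).
\end{equation*}
Now feed $L$, $S$ and $A''$ into Corollary \ref{cor.cs}: again either density-increment and iterate, or we have the large inner product
\begin{equation*}
\langle 1_L \ast (1_S d\beta''), 1_{A''}\rangle_{L^2(\beta_i)} \geq \beta_i(L)\cdot \beta''(S)\cdot \beta_i(A'')/2 = \Omega(\sigma \alpha).
\end{equation*}
Pairing this with the pointwise bound above and unwinding the two layers of Bohr averaging yields a lower bound $\Omega(\alpha^2\sigma \, \mu_G(B_i)\mu_G(B')\mu_{B'}(B''))$ on $\langle 1_{A_i} \ast 1_{A'}, 1_{A''}\rangle$; the common shift $t$ ensures this coincides, after translation, with the target 3-AP inner product.

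The principal obstacle is the quantitative accounting. Per step, Proposition \ref{prop.inneritapplied} grows the rank by $O(\alpha^{-1}\log\alpha^{-1})$, while Corollary \ref{cor.cs} contributes the dominant $m = O(\alpha^{-1}\log^3\alpha^{-1})$ (since $\log\sigma^{-1} = O(\alpha^{-1}\log\alpha^{-1})$). Each density-increment round multiplies $\alpha_i$ by $1+\Omega(1)$, so after $O(\log 2\alpha^{-1})$ rounds we reach a terminal rank $k_\infty = O(\alpha^{-1}\log^4\alpha^{-1})$; the compounded Bohr-volume losses of shape $(1/2km)^{-O(k+m)}$ per step multiply to $\exp(-O(\alpha^{-1}\log^5\alpha^{-1}))$, matching the claimed bound. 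The delicate book-keeping is to verify that all smallness conditions on $\rho', \rho''$ can be met simultaneously and that the common shift $t$ can be chosen by averaging without losing more than a constant factor on either of the densities of $A'$ and $A''$; since these densities enter only logarithmically in the final exponent, the argument survives with $\Omega(\alpha)$ rather than $\Omega(\alpha_i)$ here.
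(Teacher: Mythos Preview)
Your overall architecture matches the paper's exactly—iterate the combination of Proposition~\ref{prop.inneritapplied} and Corollary~\ref{cor.cs} inside a density-increment loop—and your quantitative accounting lands at the right answer. But the execution of the translate step has a genuine error.

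Your claim that ``using the same shift $t$ for $A'$ and $A''$ ensures the AP equation is preserved'' is false once $t_i\neq 0$. With $A_i\subset A-t_i$, $A'\subset -2.A+t$, $A''\subset -A+t$, a contribution to $\langle 1_{A_i}\ast 1_{A'},1_{A''}\rangle$ at $(y,c)$ decodes as $y=a_1-t_i$, $c-y=-2a_2+t$, $c=-a_3+t$; eliminating gives $a_1+a_3=2a_2+t_i$, not a genuine $3$-AP. So after the first density increment you are no longer counting what you need. Relatedly, your ``first-moment argument'' cannot simply produce a common $t$ with the stated properties: there is no reason a random $t$ puts mass $\Omega(\alpha)$ of $-2.A$ into the \emph{small} set $B'$ while simultaneously putting mass $\Omega(\alpha)$ of $-A$ into $B_i$. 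And even if it did, applying Corollary~\ref{cor.cs} to $A''$ of density merely $\Omega(\alpha)$ yields a density increment relative to $\Omega(\alpha)$, which need not exceed the current $\alpha_i$ and so does not advance the iteration.

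The paper repairs all three points simultaneously. It does \emph{not} carry a fixed translate $A_i$ between steps; instead, at step $i$ it uses regularity to show that either one of $\|1_A\ast\beta^{(i)'}\|_\infty$, $\|1_A\ast\beta^{(i)'}_{\rho''}\|_\infty$ already exceeds $\alpha_i(1+c)$ (free increment), or there is a single $x_i$ with $1_A\ast\beta^{(i)'}(x_i)$ and $1_A\ast\beta^{(i)'}_{\rho''}(x_i)$ both $\geq \alpha_i(1-c)$. It then takes $A_1=(A-x_i)\cap B^{(i)'}$ and $A_2=(2x_i-2.A)\cap B^{(i)''}$ with $B^{(i)''}:=2.B^{(i)'}_{\rho''}$, so the shifts are $x_i,2x_i,x_i$ and the constraint $a_1+a_3=2a_2$ is recovered exactly. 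The third set is $-A_1$, which automatically has density $\alpha_i$ in $B^{(i)'}$, so the density increment from Corollary~\ref{cor.cs} is against $\alpha_i$ as required. Adopt this translate scheme (and the $2\cdot$ dilation of the inner Bohr set) and your argument goes through.
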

There is some merit in trying to control the logarithmic term here.  Indeed, while it seems likely that with care one could improve the $5$ a bit, if one could replace it by $1-\Omega(1)$ then one could use the $W$-trick (as popularised by Green \cite{gre::8}) to deduce van der Corput's theorem pretty easily; if one could replace it by $-\Omega(1)$ then van der Corput's theorem would follow directly from the prime number theorem.

Even more ambitiously, the Erd{\H o}s-Tur{\'a}n conjecture would follow (for progressions of length three) if one could replace the $5$ by $-(1+\Omega(1))$.  However, despite the fact that such an improvement appears small it seems that a new idea would probably be required to prove such a result since it is not known even in the model setting of $G=(\Z/3\Z)^n$.  (The best result known there is the celebrated Roth-Meshulam theorem of Meshulam \cite{mes::}.)

The proof of Theorem \ref{thm.main} is an iterative application of the following lemma.
\begin{lemma}\label{lem.mainit}
Suppose that $B$ is a regular $d$-dimensional Bohr set, $B'$ is a regular rank $k$ Bohr set with $B' \subset B_{\rho'}$,$B''\subset B_{\rho''}'$, $A \subset B$ has relative density $\alpha$ and $A' \subset B'$ has relative density $\alpha'$. Then either
\begin{enumerate}
\item (large inner product)
\begin{equation*}
\langle 1_A \ast (1_{A'}d\beta'),1_{-A}\rangle_{L^2(\beta)}\geq \mu_{B'}(B'')(\alpha'/2)^{O(\alpha^{-1})};
\end{equation*}
\item (density increment) or there is a regular Bohr set $B'''$ with rank at most $k + O(\alpha^{-1}(\log^32\alpha^{-1})(\log2\alpha'^{-1}))$ and
\begin{equation*}
\mu_{B'}(B''') \geq \left(\frac{\alpha}{2k\log 2\alpha'^{-1}}\right)^{O(k+\alpha^{-1}(\log^32\alpha^{-1})(\log2\alpha'^{-1}))}
\end{equation*}
and $\|1_A \ast \beta'''\|_{L^\infty(\mu_G)} \geq \alpha(1+c_{\ref{lem.mainit}})$;
\end{enumerate}
provided $\rho' \leq c_{\ref{lem.mainit}}\alpha/d$ and $\rho'' \leq c_{\ref{lem.mainit}}\alpha'/k$.
\end{lemma}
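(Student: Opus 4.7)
The plan is to combine Proposition~\ref{prop.inneritapplied} with Corollary~\ref{cor.cs}: the former turns the pair $(A,A')$ into a much better behaved pair $(L,S)$ (when no density increment is already available), and the latter is the only ingredient in the paper strong enough to extract inner product information from $L$ and $S$ without paying the $\alpha^{1/2}$ loss anticipated in case~(\ref{pt.1}) of Section~\ref{sec.bcs}.

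First I would apply Proposition~\ref{prop.inneritapplied} to $A\subset B$ and $A'\subset B'$. If its first alternative occurs then, after comparing parameters, we are already in case~(2) of the lemma; otherwise we obtain sets $L\subset B$ and $S\subset B''$ with $\beta(L)=\Omega(1)$ and $\beta''(S)\geq (\alpha'/2)^{O(\alpha^{-1})}$ and the pointwise majorisation
\begin{equation*}
1_L\ast(1_Sd\beta'')(x)\leq C_{\ref{prop.inneritapplied}}\alpha^{-1}\mu_{B'}(B'')^{-1}\,1_A\ast(1_{A'}d\beta')(x) \textrm{ for all } x\in G.
\end{equation*}

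Next I would feed the triple $(L,-A,S)$ into Corollary~\ref{cor.cs} (played against the Bohr sets $B$ and $B''$), using the symmetry $-B=B$ so that $-A$ has the same density $\alpha$ in $B$. If the corollary lands in its first case then $\langle 1_L\ast(1_Sd\beta''),1_{-A}\rangle_{L^2(\beta)}\geq \lambda\sigma\alpha/2$ with $\lambda=\Omega(1)$ and $\sigma\geq (\alpha'/2)^{O(\alpha^{-1})}$; integrating the pointwise majorisation above against $1_{-A}d\beta$ converts this into
\begin{equation*}
\langle 1_A\ast(1_{A'}d\beta'),1_{-A}\rangle_{L^2(\beta)}\geq \mu_{B'}(B'')\,(\alpha'/2)^{O(\alpha^{-1})},
\end{equation*}
which is case~(1) of the lemma once the spurious factor of $\alpha^{2}$ is absorbed into the constant in the exponent. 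If instead the corollary returns its density increment on $-A$, then the symmetry of the output Bohr set $B'''$ (via $1_{-A}\ast\beta'''(x)=1_A\ast\beta'''(-x)$) transports it to an $L^\infty$-increment on $A$, giving case~(2).

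The only step really needing attention is the parameter bookkeeping. With $\lambda=\Omega(1)$ and $\log 2\sigma^{-1}=O(\alpha^{-1}\log 2\alpha'^{-1})$, the quantity $m$ produced by Corollary~\ref{cor.cs} becomes $O(\alpha^{-1}(\log 2\alpha^{-1})^{3}\log 2\alpha'^{-1})$, exactly matching the rank bound claimed in case~(2); the extra $(\log 2\alpha^{-1})^{3}$ appearing in the denominator of the base of the measure lower bound is polynomial in the other parameters and is harmlessly absorbed by the $O(k+m)$ in the exponent. The smallness conditions on $\rho'$ and $\rho''$ inherited from both ingredients are implied, up to constants, by the hypotheses of the lemma. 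I foresee no substantive obstacle beyond this routine tracking of parameters, since all the analytic work sits inside the two ingredients already established.
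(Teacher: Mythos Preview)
Your proposal is correct and follows essentially the same route as the paper's own proof: apply Proposition~\ref{prop.inneritapplied} to obtain either the density increment or the pair $(L,S)$ with the pointwise majorisation, then feed $(L,S,-A)$ into Corollary~\ref{cor.cs} and use the majorisation (together with the symmetry $B=-B$) to convert its large-inner-product alternative into case~(1), while its density-increment alternative gives case~(2) with the stated parameters. Your bookkeeping on $m$ and on the absorption of the stray polynomial factors of $\alpha$ into the exponent of $(\alpha'/2)^{O(\alpha^{-1})}$ is accurate.
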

\begin{proof}
We apply Proposition \ref{prop.inneritapplied} to see that (provided $\rho'$ and $\rho''$ aren't too large) either we are in the second case of the lemma or else there are sets $L\subset B$ and $S\subset B''$ with $\beta(L) =\Omega(1)$ and $\beta''(S) \geq (\alpha'/2)^{O(\alpha^{-1})}$ such that
\begin{equation*}
1_L \ast (1_Sd\beta'') \leq C_{\ref{prop.inneritapplied}}\alpha^{-1}\mu_{B'}(B'')^{-1} 1_A \ast (1_{A'}d\beta').
\end{equation*}
In this latter case we apply Corollary \ref{cor.cs} (to the set $-A$ provided $\rho'$ isn't too large) to get that either we are in the second case of the lemma or else
\begin{equation*}
\langle 1_L \ast (1_Sd\beta''),1_{-A} \rangle_{L^2(\beta)} \geq \alpha \beta(L)\beta''(S)/2 \geq (\alpha'/2)^{O(\alpha^{-1})},
\end{equation*}
and we are in the first case of the lemma.
\end{proof}

\begin{proof}[Proof of Theorem \ref{thm.main}]
We construct a sequence of regular Bohr sets $B^{(i)}$ and sequences
\begin{equation*}
k_i:=\rk(B^{(i)}),d_i=\dim B^{(i)} \textrm{ and } \alpha_i:=\|1_A \ast \beta^{(i)}\|_{L^\infty(\mu_G)}.
\end{equation*}
We initialise with $B^{(0)}=G$ which is easily seen to be regular so that $\alpha_0=\alpha$.  Suppose that we are at stage $i$ of the iteration.  

We have $d_i=O(k_i)$ and so by regularity we have that
\begin{equation*}
\|1_A \ast \beta^{(i)} \ast \beta^{(i)}_{\rho'} +1_{A} \ast \beta^{(i)}\ast \beta^{(i)}_{\rho'\rho''} - 2(1_A \ast \beta^{(i)})\|_{L^\infty(\mu_G)} = O(\rho'k_i).
\end{equation*}
It follows that we can pick $\rho',\rho'' = \Omega(\alpha/k_i)$ such that $B^{(i)'}:=B_{\rho'}^{(i)}$ is regular of dimension $d_i$, $B^{(i)''}:=2.B_{\rho'\rho''}^{(i)}$ is regular of rank $k_i$,
\begin{equation*}
B^{(i)''} \subset B^{(i)'}_{c_{\ref{lem.mainit}}\alpha/2d_i},
\end{equation*}
and
\begin{equation*}
\|1_A \ast \beta^{(i)} \ast \beta^{(i)'} +1_{A} \ast \beta^{(i)}\ast \beta^{(i)'}_{\rho''} - 2(1_A \ast \beta^{(i)})\|_{L^\infty(\mu_G)} \leq c_{\ref{lem.mainit}}\alpha/4.
\end{equation*}
If
\begin{equation*}
\|1_A \ast \beta^{(i)'}\|_{L^\infty(\mu_G)} \geq \alpha_i(1+c_{\ref{lem.mainit}}/4) \textrm{ or } \|1_A \ast \beta^{(i)'}_{\rho''}\|_{L^\infty(\mu_G)} \geq \alpha_i(1+c_{\ref{lem.mainit}}/4)
\end{equation*}
then we let $B^{(i+1)}$ be $B^{(i)'}$ or $B^{(i)'}_{\rho''}$ respectively and see that
\begin{equation*}
k_{i+1} =k_i, \mu_G(B^{(i+1)}) \geq \mu_G(B^{(i)})(\alpha/2k_i)^{O(k_i)} \textrm{ and } \alpha_{i+1} \geq \alpha_i(1+c_{\ref{lem.mainit}}/4).
\end{equation*}
Otherwise, by averaging, there is some $x_i$ such that
\begin{equation*}
1_A \ast \beta^{(i)'}(x_i) \geq \alpha_i(1-c_{\ref{lem.mainit}}/2) \textrm{ and } 1_A \ast \beta^{(i)'}_{\rho''}(x_i) \geq \alpha_i(1-c_{\ref{lem.mainit}}/2).
\end{equation*}
Translating by $x_i$ we get a set $A_1:=(A-x_i) \cap B^{(i)'}$ and $A_2:=(2x_i-2.A)\cap B^{(i)''}$ such that
\begin{equation*}
\beta^{(i)'}(A_1) \geq \alpha_i(1-c_{\ref{lem.mainit}}/2) \textrm{ and } \beta^{(i)''}(A_2) \geq \alpha/2,
\end{equation*}
and
\begin{equation*}
\langle 1_A \ast 1_{-2.A},1_{-A} \rangle \geq \mu_G(B^{(i)'})\mu_G(B^{(i)''})\langle 1_{A_1} \ast (1_{A_2}d\beta^{(i)''}),1_{-A_1}\rangle_{L^2(\beta^{(i)'})}.
\end{equation*}
Now we apply the preceding lemma to see that either
\begin{equation}\label{eqn.term}
\langle 1_A \ast 1_{-2.A},1_{-A} \rangle \geq \mu_G(B^{(i)'})\mu_G(B^{(i)''}_{c_{\ref{lem.mainit}}\alpha/2k_i})(\alpha/2)^{O(\alpha^{-1})},
\end{equation}
or there is a Bohr set $B^{(i+1)}$ such that
\begin{equation*}
k_{i+1} \leq k_i+O(\alpha_i^{-1}\log^42\alpha^{-1}),
\end{equation*}
\begin{equation*}
\mu_{G}(B^{(i+1)}) \geq \mu_G(B^{(i)})\left(\frac{\alpha}{2k_i}\right)^{O(k_i+\alpha_i^{-1}\log^42\alpha^{-1})},
\end{equation*}
and 
\begin{equation*}
\alpha_{i+1} \geq \alpha_i(1+c_{\ref{lem.mainit}}/2).
\end{equation*}

Since $\alpha_i$ cannot exceed $1$ the iteration described above must terminate after $i_0=O(\log2\alpha^{-1})$ steps with (\ref{eqn.term}).  By summing the geometric progression we see that
\begin{equation*}
k_{i_0} = O(\alpha^{-1}\log^42\alpha^{-1}) \textrm{ and } \mu_G(B^{(i_0)}) \geq (\alpha/2)^{O(\alpha^{-1}\log^42\alpha^{-1})}.
\end{equation*}
Inserting this in (\ref{eqn.term}) gives the required result.
\end{proof}

\section*{Acknowledgements}

The author should like to thank an anonymous referee for useful comments and suggesting the connection between the material in \S\ref{sec.kk} and the Dyson $e$-transform, Thomas Bloom for numerous corrections, Ben Green for encouragement, and Julia Wolf for many useful comments and encouragement.

\bibliographystyle{halpha}

\bibliography{references}

\end{document}